\let\inf\relax \DeclareMathOperator*\inf{\vphantom{p}inf}
\let\liminf\relax \DeclareMathOperator*\liminf{\vphantom{p}liminf}
\let\lim\relax \DeclareMathOperator*\lim{\vphantom{p}lim}
\let\max\relax \DeclareMathOperator*\max{\vphantom{p}max}
\newtheorem{theorem}{Theorem}
 \numberwithin{dummy}{section}
\newtheorem{proposition}{Proposition}
\newtheorem{lemma}{Lemma}
\newenvironment{proof}{\noindent\\ \noindent{\sc    Proof}}{{\samepage\par\nopagebreak\hbox to\hsize{\hfill$\Box$}}}
\newcommand{\be}{\begin{equation}}
\newcommand{\ee}{\end{equation}}
\numberwithin{equation}{section}
\newcommand{\rP}{\mathrm{P\space }} 
\newcommand{\rE}{\mathrm{E\space }} 
\newcommand{\rV}{\mathrm{Var\hspace{0.2mm}}} 
\newcommand{\rC}{\mathrm{Cov\hspace{0.2mm}}}
\begin{document}

\title{Critical branching as a pure death process  coming down from infinity
}

\author{ Serik Sagitov \\
Chalmers University of Technology and University of Gothenburg}

\date{} 
\maketitle

\begin{abstract}
We consider the critical Galton-Watson process with overlapping generations stemming from a single founder.
Assuming that both the variance of the offspring number and the average generation length are finite, we establish the convergence of the finite-dimensional distributions, conditioned on non-extinction at a remote time of observation. The limiting process is identified as a pure death process coming down from infinity.

This result brings a new perspective on  Vatutin's dichotomy claiming that in the critical regime of age-dependent reproduction, an extant population either contains a large number of short-living individuals or consists of few long-living individuals.

\end{abstract}

\section{Introduction
}\label{sec:int}

Consider a self-replicating system evolving in the discrete time setting according to the next rules:
 \begin{description}
\item[\ \ \,-]  the system is founded by a single individual, the founder  born at time 0,
\item[\ \ \,-]  the founder dies at a random age $L$ and gives a random number $N$ of births at random ages $\tau_j$ satisfying
\begin{equation*}
 1\le\tau_1\le \ldots\le \tau_N\le L, 
\end{equation*}
\item[\ \ \,-]  each new individual lives independently from others according to the same life law as the founder.
\end{description}
An individual which was born at time $t_1$ and dies at time $t_2$ is considered to be  alive during the time interval $[t_1,t_2-1]$. Letting $Z(t)$ stand for the number of individuals alive at time $t$, we study the random dynamics of the sequence
$$Z(0)=1, Z(1), Z(2),\ldots,$$ 
which is a natural extension of the well-known Galton-Watson process, or \textit{GW-process} for short, 
see \cite{WG}. 
The process $Z(\cdot)$ is the discrete time version of what is usually called the Crump-Mode-Jagers process or the general branching process, see \cite{J}. 
 To emphasise the discrete time setting, we call it a GW-process with overlapping generations, or \textit{GWO-process} for short.
 
Put $b:=\frac{1}{2}\rV(N)$. This paper deals with the GWO-processes satisfying
 \begin{equation}
 \label{ir}
\rE(N)=1,\quad 0<b<\infty.
\end{equation}
Condition $\rE(N)=1$ says that the reproduction regime is critical, implying $\rE(Z(t))\equiv1$ and making extinction inevitable, provided $b>0$.
According to  \cite[Ch I.9]{AN}, given \eqref{ir},
 the survival probability 
 $$Q(t):=\rP(Z(t)>0)$$
  of a GW-process satisfies the asymptotic formula 
$tQ(t)\to b^{-1}$ as $t\to\infty$ (this was first proven in \cite{K} under a third moment assumption). A direct extension of this classical result for the GWO-processes, $$tQ(ta)\to b^{-1},\quad t\to\infty,\quad a:=\rE(\tau_1+\ldots+\tau_N),$$
was obtained in \cite{D,H} under conditions \eqref{ir}, $a<\infty$, 
\begin{equation}
  \label{d0}
t^2\rP(L>t)\to 0,\quad t\to\infty,
\end{equation}
plus an additional extra condition. (Notice that  by our definition, $a\ge1$, and $a=1$ if and only if $L\equiv1$, that is when the GWO-process in question is a GW-process). Treating $a$  as the \textit{mean generation length}, see  \cite{J,21}, we may conclude that the asymptotic behaviour of the critical GWO-process with \textit{short-living individuals}, see condition \eqref{d0}, is similar to that of the critical GW-process, provided time is counted generation-wise. 

New asymptotical patterns for the critical GWO processes are found under the assumption
 \begin{equation}
  \label{d}
t^2\rP(L>t)\to d,\quad 0\le d< \infty,\quad t\to\infty,
\end{equation}
which compared to \eqref{d0}, allows the existence of \textit{long-living individuals} given $d>0$.  Condition \eqref{d} was first introduced in the pioneering paper \cite{V79} dealing with the \textit{Bellman-Harris processes}.  In the current discrete time setting, the Bellman-Harris process is a GWO-process subject to two restrictions:  
 \begin{description}
\item[\ \ \,-]  $\rP(\tau_1=\ldots=\tau_N= L)=1$, so that all births occur at the moment of individual's death,
\item[\ \ \,-]   the random variables $L$ and $N$ are independent. 
\end{description}
For the Bellman-Harris process, conditions \eqref{ir} and \eqref{d} imply $a=\rE(L)$,  $a<\infty$, and according to  \cite[Theorem 3]{V79}, we get
 \begin{equation}
  \label{ad}
tQ(t)\to h,\quad t\to\infty,\qquad  h:=\frac{a+\sqrt{a^2+4bd}}{2b}.
\end{equation}
As was shown in  \cite[Corollary B]{T}, see also \cite[Lemma 3.2]{95} for an adaptation to the discrete time setting, relation  \eqref{ad} holds even for the GWO-processes satisfying conditions \eqref{ir}, \eqref{d}, and $a<\infty$.

The main result of this paper, Theorem 1 of Section \ref{main}, considers a critical GWO-process under the above mentioned neat set of assumptions \eqref{ir}, \eqref{d}, $a<\infty$, and establishes the convergence of the finite-dimensional distributions conditioned on survival at a remote time of observation. A remarkable feature of this result is that its limit process is fully described by a single parameter $c:=4bda^{-2}$, regardless of complicated mutual dependencies  between the random variables $\tau_j, N,L$.  

Our proof of Theorem 1, requiring an intricate asymptotic analysis of multi-dimensional probability generating functions, for the sake of readability, is split into two sections. Section \ref{out} presents a new proof of \eqref{ad} inspired by the proof of \cite{V79}. The crucial aspect of this approach, compared to the proof of \cite[Lemma 3.2]{95},
 is that certain essential steps do not rely on the monotonicity of the function $Q(t)$. In Section \ref{Lp1},  the technique of Section \ref{out} is further developed to finish the proof of Theorem 1. 

We conclude this section by mentioning the illuminating family of GWO-processes called the \textit{Sevastyanov  processes} \cite{Sev}.  The Sevastyanov process is a generalised version of the Bellman-Harris process, with possibly dependent $L$ and $N$. In the critical case, the mean generation length of the Sevastyanov process, $a=\rE(L N)$, can be represented as
$$a=\rC(L,N)+\rE(L).$$
Thus, if $L$ and $N$ are positively correlated, the average generation length $a$ exceeds the average life length $\rE(L)$.

Turning to a specific example of the Sevastyanov process, take
\[\rP(L= t)= p_1 t^{-3}(\ln\ln t)^{-1},  \quad \rP(N=0|L= t)=1-p_2,\quad \rP(N=n_t|L= t)=p_2,  \ t\ge2,\]
where $n_t:=\lfloor t(\ln t)^{-1}\rfloor$ and $(p_1,p_2)$ are such that
\[\sum_{t=2}^\infty  \rP(L= t)=p_1 \sum_{t=2}^\infty  t^{-3}(\ln\ln t)^{-1}=1,\quad \rE(N)=p_1p_2\sum_{t=2}^\infty  n_t t^{-3}(\ln\ln t)^{-1}=1.\]
In this case, for some positive constant $c_1$,
\[\rE(N^2)= p_1p_2\sum_{t=1}^\infty  n_t^2 t^{-3}(\ln\ln t)^{-1}< c_1\int_2^\infty \frac{d (\ln t)}{(\ln t)^2\ln\ln t}<\infty,\]
implying that condition \eqref{ir} is satisfied. Clearly, condition \eqref{d} holds with $d=0$.
At the same time,
\[a=\rE(NL)= p_1p_2\sum_{t=1}^\infty  n_t t^{-2}(\ln\ln t)^{-1}>   c_2\int_2^\infty \frac{d (\ln t)}{(\ln t)(\ln\ln t)}=\infty,\]
where $c_2$ is a positive constant. This example demonstrates that for the GWO-process, unlike the Bellman-Harris process, 
conditions \eqref{ir} and \eqref{d} do not automatically imply the condition $a<\infty$.

\section{The main result}\label{main}

\begin{theorem}\label{thL} 
For a GWO-process satisfying \eqref{ir}, \eqref{d} and $a<\infty$, there holds a weak convergence of the finite dimensional distributions
\begin{align*}
(Z(ty),0<y<\infty|Z(t)>0)\stackrel{\rm fdd\,}{\longrightarrow} (\eta(y),0<y<\infty),\quad t\to\infty.
\end{align*}
The limiting process is a continuous time pure death process $(\eta(y),0\le y<\infty)$, 
whose evolution law is determined by a single compound parameter $c=4bda^{-2}$, as specified next. 
\end{theorem}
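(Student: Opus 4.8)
The plan is to recast the finite-dimensional convergence as an asymptotic problem for multivariate probability generating functions, to pin down the one-dimensional marginals by refining the argument behind \eqref{ad}, and then to read off the transition law of the limiting pure death process from a two-time generating functional. First I would record the fundamental branching decomposition of the founder's family: writing $Z^{(1)},Z^{(2)},\dots$ for independent copies of the process, distributed as $Z$ and independent of the founder's life history $(L,N,\tau_1,\dots,\tau_N)$, one has
\be
Z(t)=\mathbf 1\{L>t\}+\sum_{j=1}^{N}Z^{(j)}(t-\tau_j),\qquad t\ge 1,
\ee
with the convention $Z^{(j)}(u)=0$ for $u<0$ and $Z^{(j)}(0)=1$. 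Averaging over the reproduction law turns this into a functional equation for the joint generating function $\rE\big(\prod_{i}s_i^{Z(ty_i)}\big)$, expressing it through the same functionals evaluated at the shifted times $ty_i-\tau_j$. Since the candidate limit $\eta$ is a Markov pure death process, it suffices to establish convergence of the two-time functionals and then to bootstrap to general $n$ by iterating this equation and invoking the Markov property of the limit; the one-dimensional marginals are the diagonal special case.

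Next I would treat the one-time quantity $R_t(s):=1-\rE\big(s^{Z(t)}\big)$, so that $R_t(0)=Q(t)$ and $\rE\big(s^{Z(t)}\mid Z(t)>0\big)=1-R_t(s)/Q(t)$. The idea is to run the Section \ref{out} analysis — which produces $tQ(t)\to h$ with $h$ the positive root of $bh^2=ah+d$ through an asymptotic fixed-point relation that sidesteps the monotonicity of $Q$ — now with a general argument $s$ in place of $0$. I expect this to give $R_t(s)\sim\rho(s)\,Q(t)$, where $\rho$ solves a Riccati/quadratic relation whose coefficients enter only through $c=4bd\,a^{-2}$, thereby identifying the marginal $\rE\big(s^{\eta(1)}\big)=1-\rho(s)$ as a proper but heavy-tailed law. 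The finiteness of $\eta(1)$ despite $\rE\big(Z(t)\mid Z(t)>0\big)=1/Q(t)\to\infty$ is precisely the analytic reflection of Vatutin's dichotomy: the conditioned population is typically carried by a few long-living lineages, with a rare heavy-tail event accounting for the large short-living populations.

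The substantive step is the two-time functional $\rE\big(s^{Z(ty)};\,Z(t)>0\big)$, which already encodes every marginal with $y\ne1$ and, more importantly, the transition kernel of $\eta$. Conditioning on the configuration at time $ty$, the mechanism I would exploit is that in the limit the finitely many lineages present each continue and survive to time $t$ independently, with survival probabilities governed by the same fixed-point function; this independent thinning is exactly a pure death transition, and matching its generating function against the semigroup of a death process determines the death rates as functions of $c$ and fixes the time scale. Checking Chapman–Kolmogorov consistency and extending the computation to $n$ times via the functional equation then yields the full fdd statement, including the coming-down-from-infinity behaviour $\eta(0{+})=\infty$, obtained from the degeneration of $\rho$ as $y\to0$.

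The hard part will be the uniform asymptotic control of these generating functionals in the genuinely non-Markovian setting. The random birth times $\tau_j$ and lifetimes $L$ couple distinct time scales inside the functional equation, so the error terms must be dominated using $a<\infty$ (renewal and dominated-convergence estimates), while the long-living contribution, entering through $t^2\rP(L>t)\to d$, has to be extracted at the exact critical order $t^{-2}$. Carrying this out without the monotonicity of $Q(t)$ or $R_t(s)$ — the very feature distinguishing this approach from \cite{95} — is the principal obstacle, together with the accompanying tightness argument showing that no mass escapes to $+\infty$, which is what makes each $\eta(y)$ an honest finite-valued random variable.
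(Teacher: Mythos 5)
Your one--dimensional plan coincides with the paper's: Section \ref{Lp1} runs the fixed-point analysis of Section \ref{out} on $Q_k(t)=1-\rE\big(z_1^{Z(t+t_1)}\cdots z_k^{Z(t+t_k)}\big)$ and your $R_t(s)\sim\rho(s)Q(t)$ with $\rho$ solving a quadratic in $c$ is exactly the $k=1$ case of \eqref{love}, with $bh_1^2=ah_1+d(1-s)$ replacing \eqref{stop}. The multi-time part of your proposal, however, has two genuine gaps. First, your closing tightness claim --- ``no mass escapes to $+\infty$, which is what makes each $\eta(y)$ an honest finite-valued random variable'' --- is false: the theorem's limit satisfies $\rP(\eta(y)=\infty)>0$ for \emph{every} $y>0$ (e.g.\ $\rP(\eta(y)<\infty)=\frac{\sqrt{1+cy^2}-1}{(1+\sqrt{1+c})\,y}$ for $0<y<1$, which vanishes identically when $c=0$), and the fdd convergence is weak convergence on the extended state space $\{0,1,\dots\}\cup\{\infty\}$, read off from generating functions at $z\in[0,1)$. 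This is not a technicality: your mechanism for the two-time functional --- ``the finitely many lineages present each continue and survive to time $t$ independently'' --- breaks down precisely on the event $\{\eta(y)=\infty\}$, which carries positive probability and embodies the ``many short-living individuals'' half of Vatutin's dichotomy that your own marginal analysis acknowledges.

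Second, the bootstrap ``two-time convergence $+$ Markov property of the limit $\Rightarrow$ $n$-time convergence'' is circular. The convergence statement concerns the prelimit process, which is not Markov in calendar time (the future depends on the ages of the extant individuals, not merely on their number), so its $n$-time conditioned distributions cannot be factorized through two-time kernels; and the Markov property of the limit is a property of the very fdd's you are trying to construct, so it cannot be ``invoked'' before they are computed. Making your conditioning-at-time-$ty$ picture rigorous would require proving an asymptotic Markovianity/decoupling statement (survivors are long-living individuals born at times $o(t)$, with asymptotically independent Pareto residual lives, whose descendants never register at scale $t$), which is essentially as hard as the $n$-time problem itself. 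The paper bypasses all of this: it fixes proportional time points $t_i=t(y_i-1)$, so that the $k$-dimensional generating function becomes a scalar function $Q_k(t)$ of the single variable $t$; it derives for $Q_k$ the counterpart of Lemma \ref{fQd} (Lemma \ref{fad}), replaces the monotonicity of $Q$ used in \eqref{ca} by the sandwich $(1-z_1)Q(t)\le Q_k(t)\le Q(t)$, and reruns the whole argument to get \eqref{dm}, $tQ_k(t)\to h_k$ with $bh_k^2=ah_k+dg_k$ --- the multi-time structure entering only through the elementary term $\sum_{i}z_1\cdots z_i\rP(ty_i<L\le ty_{i+1})\sim d(g_k-(1-z_1\cdots z_k\cdot 0))t^{-2}$ in Lemma \ref{fad}; finally the conditional laws follow algebraically from \eqref{love} via $Q_j^*$ and the $j=0$ versus $j\ge1$ dichotomy of \eqref{mansur}. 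If you want to salvage your route, the honest version of your ``independent thinning'' step is exactly the statement you would have to prove in place of \eqref{dm}, and nothing in your outline indicates how to do so without the generating-function recursion \eqref{apes}.
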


The finite dimensional distributions of the limiting process $\eta(\cdot)$ are given below in  terms of the $k$-dimensional probability generating functions
$\rE(z_1^{\eta(y_1)}\cdots z_k^{\eta(y_k)})$, $k\ge1$, assuming
\begin{equation}\label{mansur}
 0=y_0< y_1< \ldots<  y_{j}<1\le y_{j+1}< \ldots<  y_k<y_{k+1}=\infty,\quad 0\le j\le k,\quad 0\le z_1,\ldots,z_k<1.
\end{equation}
Here the index $j$ highlights the pivotal value 1 corresponding to the time of observation $t$ of the underlying GWO-process. 

As will be shown in Section \ref{Send}, if $j=0$, then
\begin{align*}
\rE(z_1^{\eta(y_1)}\cdots z_k^{\eta(y_k)})=1-\frac{1+\sqrt{1+\sum\nolimits_{i=1}^{k}z_1\cdots z_{i-1}(1-z_{i})\Gamma_i}}{(1+\sqrt{1+c})y_1},\quad \Gamma_i:=c({y_1}/{y_i} )^2,
\end{align*}
and
if $j\ge1$,
\begin{align*}
\rE(z_1^{\eta(y_1)}\cdots z_k^{\eta(y_k)})=\frac{\sqrt{1+\sum_{i=1}^{j}z_1\cdots z_{i-1}(1-z_{i})\Gamma_i+cz_1\cdots z_{j}y_1^{2} }-\sqrt{1+\sum\nolimits_{i=1}^{k}z_1\cdots z_{i-1}(1-z_{i})\Gamma_i}}{(1+\sqrt{1+c})y_1}.
\end{align*} 
In particular, for $k=1$, we have
\begin{align*}
\rE(z^{\eta(y)})&= \frac{\sqrt{1+c(1-z)+czy^{2}}-\sqrt{1+c(1-z)}}{(1+\sqrt{1+c})y},\quad 0< y<1,\\
\rE(z^{\eta(y)})&= 1-\frac{1+\sqrt{1+c(1-z)}}{(1+\sqrt{1+c})y},\quad y\ge1. 
\end{align*}
It follows that $\rP(\eta(y)\ge0)=1$ for $y>0$, and moreover, putting here first $z=1$ and then $z=0$, brings
\begin{align*}
\rP(\eta(y)<\infty)&=\frac{\sqrt{1+cy^2}-1}{(1+\sqrt{1+c})y}\cdot1_{\{0< y<1\}}+\Big(1-\frac{2}{(1+\sqrt{1+c})y}\Big)\cdot1_{\{y\ge 1\}},\\
\rP(\eta(y)=0)&=\frac{y-1}{y}\cdot1_{\{y\ge 1\}},
\end{align*}
implying that $\rP(\eta(y)=\infty)>0$ for all $y>0$, and in fact, letting $y\to0$, we may set
$\rP(\eta(0)=\infty)=1.$

To demonstrate that  the process $\eta(\cdot)$ is indeed a pure death process, consider the function
\[\rE(z_1^{\eta(y_1)-\eta(y_2)}\cdots z_{k-1}^{\eta(y_{k-1})-\eta(y_{k})}z_k^{\eta(y_k)})\]
determined by
\begin{align*}
\rE(z_1^{\eta(y_1)-\eta(y_2)}\cdots z_{k-1}^{\eta(y_{k-1})-\eta(y_{k})}z_k^{\eta(y_k)})
&=\rE(z_1^{\eta(y_1)}(z_2/z_1)^{\eta(y_2)}\cdots (z_k/z_{k-1})^{\eta(y_k)}).
\end{align*}
This function is given by two expressions
\begin{align*}
\frac{(1+\sqrt{1+c})y_1-1-\sqrt{1+\sum\nolimits_{i=1}^{k} (1-z_{i})\gamma_i}}{(1+\sqrt{1+c})y_1}, \quad &\text{for }j=0,\\
\frac{\sqrt{1+\sum\nolimits_{i=1}^{j-1}(1-z_{i})\gamma_i+(1-z_{j})\Gamma_j+cz_j y_1^2}-\sqrt{1+\sum\nolimits_{i=1}^{k} (1-z_{i})\gamma_i}}{(1+\sqrt{1+c})y_1}, \quad &\text{for }j\ge1,
\end{align*}
where $\gamma_i:=\Gamma_i-\Gamma_{i+1}$ and $\Gamma_{k+1}=0$.
Setting $k=2$, $z_1=z$, and $z_2=1$, we deduce that the function
\begin{equation}\label{lava}
 \rE(z^{\eta(y_1)-\eta(y_2)};\eta(y_1)<\infty),\quad 0<y_1<y_2,\quad 0\le z\le1,
\end{equation}
is given by one of the following three expressions depending on whether $j=2$, $j=1$, or $j=0$,
\begin{align*}
\frac{\sqrt{1+c y_1^2+c(1-z)(1-(y_1/y_2)^2)}
-\sqrt{1+c (1-z)(1-(y_1/y_2)^2)}}{(1+\sqrt{1+c})y_1},\quad &y_2<1,  \\
\frac{\sqrt{1+c y_1^2+c(1-z) (1-y_1^2)}
-\sqrt{1+c(1-z)(1-(y_1/y_2)^2)}}{(1+\sqrt{1+c})y_1},\quad &y_1<1\le y_2, \\
1- \frac{1+\sqrt{1+c(1-z)(1-(y_1/y_2)^2)}}{(1+\sqrt{1+c})y_1},\quad &1\le y_1.  
\end{align*}
Since generating function \eqref{lava} is finite at $z=0$, we conclude that 
$$\rP(\eta(y_1)< \eta(y_2); \eta(y_1)< \infty)=0,\quad 0<y_1<y_2.$$
 This implies 
 $$\rP(\eta(y_2)\le \eta(y_1))=1,\quad 0<y_1<y_2,$$
meaning that unless the process $\eta(\cdot)$ is sitting at the infinity state, it evolves by negative integer-valued jumps until it gets absorbed at zero. 

Consider now the conditional probability generating function
\begin{equation}\label{ava}
\rE(z^{\eta(y_1)-\eta(y_2)}| \eta(y_1)<\infty),\quad 0<y_1<y_2,\quad 0\le z\le1.
\end{equation}
In accordance with the above given three expressions for \eqref{lava}, generating function \eqref{ava} is specified by the following three expressions
\begin{align*}
\frac{\sqrt{1+c y_1^2+c(1-z)(1-(y_1/y_2)^2)}
-\sqrt{1+c (1-z)(1-(y_1/y_2)^2)}}{\sqrt{1+c y_1^2}-1},\quad &y_2<1,  \\
\frac{\sqrt{1+c y_1^2+c(1-z) (1-y_1^2)}
-\sqrt{1+c(1-z)(1-(y_1/y_2)^2)}}{\sqrt{1+c y_1^2}-1},\quad &y_1<1\le y_2, \\
1- \frac{\sqrt{1+c(1-z)(1-(y_1/y_2)^2)}-1}{(1+\sqrt{1+c})y_1-2},\quad &1\le y_1.  
\end{align*}
In particular, setting here $z=0$, we obtain
\[\rP(\eta(y_1)-\eta(y_2)=0| \eta(y_1)<\infty)= \left\{
\begin{array}{llr}
\frac{\sqrt{1+c(1+y_1^2-(y_1/y_2)^2)}-\sqrt{1+c(1-(y_1/y_2)^2)}}{\sqrt{1+c y_1^2}-1} &  \text{for}  &  0<y_1< y_2<1, \\
\frac{\sqrt{1+c}-\sqrt{1+c(1-(y_1/y_2)^2)}}{\sqrt{1+c y_1^2}-1} &  \text{for}  &  0<y_1<1\le y_2, \\
1- \frac{\sqrt{1+c(1-(y_1/y_2)^2)}-1}{(1+\sqrt{1+c})y_1-2} & \text{for}   &   1\le y_1<y_2.
\end{array}
\right.
\]
Notice that given $0<y_1\le1$, 
\[\rP(\eta(y_1)-\eta(y_2)=0| \eta(y_1)<\infty)\to 0,\quad y_2\to\infty,\]
which is expected because of $\eta(y_1)\ge\eta(1)\ge1$ and $\eta(y_2)\to0$ as $y_2\to\infty$.

\begin{figure}[h]
\begin{center}
\includegraphics[width=6cm]{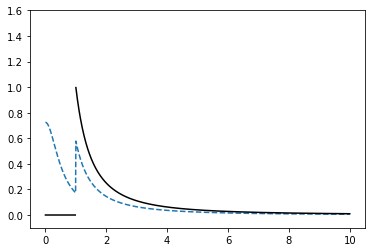}\qquad \includegraphics[width=6cm]{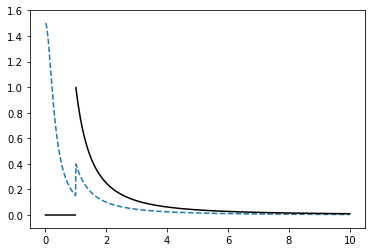}
\end{center}
\caption{The dashed line is the probability density function of $T$, the solid line is the probability density function of $T_0$. The left panel illustrates the case $c=5$, and the right panel  illustrates the case $c=15$.}
\label{trump}
 \end{figure}

The random times
\[T=\sup\{u: \eta(u)=\infty\},\quad T_0=\inf\{u:\eta(u)=0\},\]
are major characteristics of a trajectory of the limit pure death process. 
Since
\begin{align*}
\rP(T\le y)=\rE(z^{\eta(y)})\Big\vert_{z=1},\qquad  \rP(T_0\le y)=\rE(z^{\eta(y)})\Big\vert_{z=0},
\end{align*}
in accordance with the above mentioned formulas for $\rE(z^{\eta(y)})$, we get the following marginal distributions
\begin{align*}
 \rP(T\le y)&=\frac{\sqrt{1+cy^2}-1}{(1+\sqrt{1+c})y}\cdot1_{\{0\le y<1\}}+\Big(1-\frac{2}{(1+\sqrt{1+c})y}\Big)\cdot1_{\{y\ge 1\}},\\
 \rP(T_0\le y)&=\frac{y-1}{y}\cdot1_{\{y\ge 1\}}.
\end{align*}
The distribution of $T_0$ is free from the parameter $c$ and has the Pareto probability density function 
\[f_0(y)=y^{-2}1_{\{y>1\}}.\]
In the special case \eqref{d0}, that is when  \eqref{d} holds with $d=0$, we have $c=0$ and 
$\rP(T=T_0)=1$.
If $d>0$, then $T\le T_0$, and the distribution of $T$  has the following probability density function 
\[
f(y)=\left\{
\begin{array}{llr}
\frac{1}{(1+\sqrt{1+c})y^2} (1-\frac{1}{\sqrt{1+cy^2}})&  \text{for}  &  0\le y<1, \\
\frac{2}{(1+\sqrt{1+c})y^2} & \text{for}   &   y\ge1,
\end{array}
\right.
\]
having a positive jump at $y=1$ of size $f(1)-f(1-)=(1+c)^{-1/2}$. Observe that
$\frac{f(1-)}{f(1)}\to\frac{1}{2}$ as $c\to\infty$.

Intuitively, the limiting pure death process counts the long-living individuals in the GWO-process, that is those individuals whose life length is of order $t$. These long-living individuals may have descendants, however none of them would live long enough to be detected by the finite dimensional distributions at the relevant time scale, see Lemma 2 below.
Theorem \ref{thL} suggests a new perspective on Vatutin's dichotomy, see \cite{V79}, claiming that the long term survival of a critical age-dependent branching process is due to either a large number of short-living individuals or a small number of long-living individuals. 
In terms of the random times $T\le T_0$, Vatutin's dichotomy discriminates between two possibilities: if $T>1$, then $\eta(1)=\infty$, meaning that the GWO-process has survived due to a large number of individuals, while if $T\le 1<T_0$,  then $1\le \eta(1)<\infty$ meaning  that the GWO-process has survived due to a small number of individuals.

\section{Proof of \ $\boldsymbol{tQ(t)\to h}$}\label{out}

This section deals with the survival probability of the critical GWO-process 
$$Q(t)=1-P(t),\quad P(t):=\rP(Z(t)=0).$$
By its definition, the GWO-process can be represented as the sum 
\begin{equation}\label{CD}
 Z(t)=1_{\{L>t\}}+\sum\nolimits_{j=1}^{N}  Z_j(t-\tau_j),\quad t=0,1,\ldots,
\end{equation}
involving $N$ independent daughter processes $Z_j(\cdot)$  generated by the founder individual at the birth times $\tau_j$, $j=1,\ldots,N$ (here it is assumed that  $Z_j(t)=0$ for all negative $t$).
The branching property \eqref{CD} implies the relation
\[ 1_{\{Z(t)=0\}}=1_{\{L\le  t\}}\prod\nolimits_{j=1}^{N} 1_{\{Z_j (t-\tau_j)=0\}},\]
saying that the GWO-process goes extinct by the time $t$ if, on one hand, the founder is dead at time $t$ and, on the other hand, all daughter processes are extinct by the time $t$.
After taking expectations of both sides, we can write
\begin{equation}\label{ejp}
P(t)=\rE\Big(\prod\nolimits_{j=1}^{N}P(t-\tau_j);L\le t\Big).
\end{equation}
As shown next, this non-linear equation for $P(\cdot)$  entails the asymptotic formula \eqref{ad} under conditions \eqref{ir}, \eqref{d}, and $a<\infty$.

\subsection{Outline of the proof of \eqref{ad}}\label{ou}

We start by stating four lemmas and two propositions. Let
 \begin{align} 
\Phi(z)&:=\rE((1-z)^ N-1+Nz), \label{AL}\\
W(t)&:=(1-ht^{-1})^{N}+Nht^{-1}-\sum\nolimits_{j=1}^{N}Q(t-\tau_j)-\prod\nolimits_{j=1}^{N} P(t-\tau_j), \label{Wt}\\
D(u,t)&:=\rE\Big(1-\prod\nolimits_{j=1}^{N}P(t-\tau_j);\,u<L\le t\Big)+\rE\Big((1-ht^{-1})^{N} -1+Nht^{-1};L> u\Big), \label{Dut}\\
\rE_u(X)&:=\rE(X;L\le u ),\label{Et}
\end{align}
where $0\le z\le 1$, $u>0$, $t\ge h$, and $X$ is an arbitrary random variable.

\begin{lemma}\label{fQd}
Given  \eqref{AL}, \eqref{Wt}, \eqref{Dut}, and \eqref{Et}, assume that $0< u\le t$ and $t\ge h$. Then
 \begin{align*} 
\Phi(ht^{-1})= \rP(L> t)+\rE_u\Big(\sum\nolimits_{j=1}^{N}Q(t-\tau_j)\Big)-Q(t)+\rE_u(W(t))+D(u,t). 
\end{align*}
\end{lemma}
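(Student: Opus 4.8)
The plan is to verify this identity by a direct algebraic rearrangement of the right-hand side, drawing only on the criticality assumption $\rE(N)=1$ from \eqref{ir}, the fixed-point equation \eqref{ejp} for $P(\cdot)$, and the relation $Q(t)=1-P(t)$. Writing $p:=ht^{-1}$ for brevity, I would first substitute the definitions \eqref{Wt} of $W(t)$ and \eqref{Dut} of $D(u,t)$, together with \eqref{Et}, into the claimed right-hand side, so that every summand becomes an expectation of an explicit functional of $(N,L,\tau_1,\ldots,\tau_N)$ restricted to one of the events $\{L\le u\}$, $\{u<L\le t\}$, $\{L>t\}$, $\{L>u\}$.

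The first simplification I expect is a cancellation: the term $\rE_u\big(\sum_j Q(t-\tau_j)\big)$ exactly cancels the $-\sum_j Q(t-\tau_j)$ contribution inside $\rE_u(W(t))$, since both are taken on the same event $\{L\le u\}$. After this the $\sum_j Q(t-\tau_j)$ quantities, which are the only genuinely non-explicit pieces, disappear entirely, leaving only terms built from $(1-p)^N+Np$ and from $\prod_j P(t-\tau_j)$.

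Next I would group the $(1-p)^N+Np$ contributions. They appear on $\{L\le u\}$ (via $W(t)$) and on $\{L>u\}$ (via the second expectation in $D(u,t)$); together these reconstitute $\rE\big((1-p)^N+Np\big)$ over the whole sample space, which by $\rE(N)=1$ equals $\Phi(p)+1$, while the accompanying $-1$ indicator on $\{L>u\}$ contributes $-\rP(L>u)$. In parallel, the $-\prod_j P(t-\tau_j)$ terms appear on $\{L\le u\}$ and on $\{u<L\le t\}$, which merge into $-\rE\big(\prod_j P(t-\tau_j);L\le t\big)$; by \eqref{ejp} this is exactly $-P(t)$. The residual indicator terms $\rP(L>t)-\rP(L>u)+\rP(u<L\le t)$ then telescope to zero.

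Assembling, the right-hand side collapses to $\Phi(p)+1-P(t)-Q(t)$, and since $Q(t)=1-P(t)$ the constant and the last two terms cancel, yielding $\Phi(ht^{-1})$, as claimed. There is no genuine analytic obstacle here, as the statement is an exact identity rather than an asymptotic estimate; the auxiliary quantities $W(t)$ and $D(u,t)$ are engineered precisely so that the nonlinear renewal relation \eqref{ejp} repackages into this linear form. The only point demanding care is the bookkeeping of which indicator each summand carries, so that the events $\{L\le u\}$, $\{u<L\le t\}$, $\{L>t\}$ recombine correctly and the $\prod_j P(t-\tau_j)$ pieces reassemble over the full range $\{L\le t\}$ needed to invoke \eqref{ejp}.
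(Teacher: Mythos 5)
Your proposal is correct and is essentially the paper's own proof run in reverse: the paper adds the $\{L\le u\}$/$\{L>u\}$ split of $\Phi(ht^{-1})$ to the partition $1=\rP(L\le u)+\rP(u<L\le t)+\rP(L> t)$ and then subtracts the event-restricted form of \eqref{ejp}, which is precisely the bookkeeping you carry out by expanding the right-hand side and collapsing it. One cosmetic point: the identity $\rE\big((1-ht^{-1})^{N}+Nht^{-1}\big)=\Phi(ht^{-1})+1$ is purely definitional (from \eqref{AL}) and does not require $\rE(N)=1$, so your appeal to criticality at that step is superfluous though harmless.
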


\begin{lemma}\label{L3}
If \eqref{ir} and \eqref{d} hold, then $\rE(N;L>ty)=o(t^{-1})$ as $t\to\infty$ for any fixed $y>0$.
\end{lemma}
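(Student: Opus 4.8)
The plan is to exploit the two hypotheses separately: the finite second moment of $N$ coming from $0<b<\infty$, since $\rE(N^2)=\rV(N)+1=2b+1<\infty$, and the quadratic tail control $\rP(L>t)=O(t^{-2})$ supplied by \eqref{d}. A single application of the Cauchy--Schwarz inequality would give $\rE(N;L>ty)\le(\rE(N^2))^{1/2}(\rP(L>ty))^{1/2}$, which is only of order $t^{-1}$ rather than the required $o(t^{-1})$. The additional decay must be extracted from the uniform integrability of $N^2$, and the device for doing so is a truncation of $N$.

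First I would fix $y>0$ and, for an integer $K\ge1$, split
\begin{align*}
\rE(N;L>ty)=\rE(N1_{\{N\le K\}};L>ty)+\rE(N1_{\{N> K\}};L>ty).
\end{align*}
The low part is bounded crudely by $\rE(N1_{\{N\le K\}};L>ty)\le K\rP(L>ty)$, so that for each fixed $K$ one has $t\cdot K\rP(L>ty)\to0$ as $t\to\infty$, because \eqref{d} forces $t^2\rP(L>ty)\to d/y^2$ and hence $t\,K\rP(L>ty)=(K/t)\,t^2\rP(L>ty)\to0$.

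Then I would apply Cauchy--Schwarz only to the high part, keeping the truncation inside the first factor:
\begin{align*}
\rE(N1_{\{N> K\}};L>ty)\le(\rE(N^21_{\{N> K\}}))^{1/2}(\rP(L>ty))^{1/2}.
\end{align*}
Multiplying by $t$ and using $t(\rP(L>ty))^{1/2}=(t^2\rP(L>ty))^{1/2}\to\sqrt{d}/y$, I obtain
\begin{align*}
\limsup_{t\to\infty}t\,\rE(N;L>ty)\le\frac{\sqrt{d}}{y}\,(\rE(N^21_{\{N> K\}}))^{1/2}.
\end{align*}
Since $\rE(N^2)=2b+1<\infty$, letting $K\to\infty$ sends $\rE(N^21_{\{N>K\}})\to0$ by dominated convergence, so the nonnegative quantity $t\,\rE(N;L>ty)$ tends to $0$, which is precisely the assertion $\rE(N;L>ty)=o(t^{-1})$.

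The main obstacle, and really the only subtlety, is recognising that the brute-force Cauchy--Schwarz bound lands exactly at the borderline order $t^{-1}$ and therefore falls just short; the proof hinges on squeezing out the extra $o(1)$ factor through the truncation together with the uniform integrability of $N^2$. The degenerate case $d=0$ needs no separate treatment, as the limiting constant $\sqrt{d}/y$ then vanishes outright and the conclusion is immediate.
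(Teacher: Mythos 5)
Your proof is correct, but it handles the large-$N$ part differently from the paper. Both arguments share the same skeleton: split $N$ at a truncation level, bound the small part by (level)$\times\rP(L>ty)$, and let uniform integrability of $N^2$ kill the large part. The paper, however, truncates at a level $t\epsilon$ that grows with $t$ and bounds the tail term pointwise via $N\le N^2(t\epsilon)^{-1}$ on $\{N>t\epsilon\}$, which gives
\begin{equation*}
\rE(N;L>t)\le t\epsilon\,\rP(L>t)+(t\epsilon)^{-1}\rE(N^2;L>t),
\end{equation*}
so that $\limsup_{t\to\infty} t\,\rE(N;L>t)\le d\epsilon$, using $\rE(N^2;L>t)\to0$ by dominated convergence (the joint event $\{L>t\}$ is retained throughout), and then $\epsilon\to0$. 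You instead truncate at a fixed level $K$ and invoke Cauchy--Schwarz to decouple $N$ from $L$, paying a factor $(\rP(L>ty))^{1/2}$ that contributes exactly the borderline order $t^{-1}$, with the vanishing factor $(\rE(N^2 1_{\{N>K\}}))^{1/2}$ supplied by letting $K\to\infty$ afterwards. The paper's route is slightly more elementary (no Cauchy--Schwarz, one limit in $\epsilon$ playing the role of your limit in $K$) and exploits the joint quantity $\rE(N^2;L>t)$; yours cleanly separates the two hypotheses --- second moment of $N$ on one side, tail of $L$ on the other --- at the cost of the extra inequality. Both are complete and yield the same conclusion, including the case $d=0$.
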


\begin{lemma}\label{L2}
 If \eqref{ir}, \eqref{d}, and $a<\infty$ hold, then for any fixed $0<y<1$, 
\begin{align*}
\rE_{ty}\Big(\sum\nolimits_{j=1}^{N}\Big(\frac{1}{t-\tau_j}-\frac{1}{t}\Big)\Big)\sim at^{-2},\quad t\to\infty.
\end{align*}
\end{lemma}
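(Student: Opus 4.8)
The plan is to recast the claim as a single application of dominated convergence. The first step is purely algebraic: since $\frac{1}{t-\tau_j}-\frac{1}{t}=\frac{\tau_j}{t(t-\tau_j)}$, multiplying the asserted asymptotics by $t^2$ turns the quantity of interest into the expectation of the random variable
\[
G_t:=\sum\nolimits_{j=1}^{N}\frac{t\tau_j}{t-\tau_j}\,1_{\{L\le ty\}},
\qquad\text{so that}\qquad
t^2\,\rE_{ty}\Big(\sum\nolimits_{j=1}^{N}\Big(\tfrac{1}{t-\tau_j}-\tfrac1t\Big)\Big)=\rE(G_t).
\]
Thus it suffices to prove $\rE(G_t)\to a$, and I would establish this by checking pointwise convergence and an integrable uniform bound for $G_t$.

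For the pointwise limit, note that condition \eqref{d} forces $\rP(L>t)\to0$, hence $L<\infty$ almost surely. Along almost every path, the indicator $1_{\{L\le ty\}}$ therefore equals $1$ for all sufficiently large $t$, while $\frac{t}{t-\tau_j}\to1$ for each of the finitely many birth times $\tau_j$, $j\le N$. Consequently $G_t\to\sum_{j=1}^{N}\tau_j$ almost surely. For the domination, observe that on the truncation event $\{L\le ty\}$ every birth time satisfies $\tau_j\le L\le ty$, so $t-\tau_j\ge t(1-y)>0$; this is exactly where the hypothesis $y<1$ is used. It follows that $\frac{t\tau_j}{t-\tau_j}\le\frac{\tau_j}{1-y}$ on that event, and hence $G_t\le(1-y)^{-1}\sum_{j=1}^{N}\tau_j$ for every $t$ (the bound being trivial off the event, where $G_t=0$). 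The dominating variable is integrable precisely because $\rE\big((1-y)^{-1}\sum_{j}\tau_j\big)=a/(1-y)<\infty$ under the assumption $a<\infty$. Dominated convergence then yields $\rE(G_t)\to\rE\big(\sum_{j}\tau_j\big)=a$, which is the assertion (recall $a\ge1>0$, so the $\sim$ is meaningful).

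I expect no substantial obstacle; the single delicate point is securing a $t$-uniform integrable majorant for $G_t$. This is the step that genuinely uses both standing hypotheses beyond mere bookkeeping: the restriction $y<1$ keeps $t-\tau_j$ bounded away from $0$ on the truncation event, preventing the summands from blowing up, while $a<\infty$ guarantees integrability of the majorant. Condition \eqref{d} enters only through $L<\infty$ almost surely, which makes the truncation eventually inactive along each trajectory and thus drives the pointwise limit. A cruder two-sided sandwich of $t-\tau_j$ between $t(1-y)$ and $t$ would give limits trapped only between $a$ and $a/(1-y)$; the virtue of the dominated-convergence formulation is that it pins down the constant to be exactly $a$ without any further truncation-at-$\varepsilon t$ refinement.
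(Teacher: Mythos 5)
Your proof is correct, but it reaches the limit by a different mechanism than the paper. You verify the two hypotheses of the dominated convergence theorem: pointwise convergence $G_t\to\sum_{j=1}^N\tau_j$ (using $L<\infty$ and $N<\infty$ a.s.) and the $t$-uniform majorant $(1-y)^{-1}\sum_{j=1}^N\tau_j$, whose integrability is exactly $a<\infty$. The paper never invokes dominated convergence; instead it introduces a second, fixed cutoff $u$, splits $a=\rE_u(\tau_1+\ldots+\tau_N)+A_u$ with $A_u:=\rE(\tau_1+\ldots+\tau_N;L>u)$, and sandwiches the centred quantity $B_t(y):=t^2\rE_{ty}\big(\sum_{j}(\frac{1}{t-\tau_j}-\frac1t)\big)-a$ between $-A_u$ and $(1-y)^{-1}A_u+\frac{u^2}{t-u}$, then sends $t\to\infty$ followed by $u\to\infty$. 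The two arguments share the same key ingredients --- the identity $\frac{1}{t-\tau_j}-\frac1t=\frac{\tau_j}{t(t-\tau_j)}$, the bound $t-\tau_j\ge t(1-y)$ on $\{L\le ty\}$, and integrability from $a<\infty$ --- and the paper's cutoff is precisely its hand-made substitute for your majorant: by freezing $\tau_j\le u$ while $t\to\infty$, the dangerous factor $(1-y)^{-1}$ ends up multiplying only the vanishing tail $A_u$, which resolves the same difficulty you flag when noting that the crude sandwich $t(1-y)\le t-\tau_j\le t$ only traps the limit between $a$ and $a/(1-y)$. What each route buys: yours is shorter and pins the constant in one stroke; the paper's is more self-contained and quantitative, yielding explicit error bounds ($A_u$ and $u^2/(t-u)$) in the style of the surrounding estimates, and it is the version of the argument that carries over verbatim to the later, more delicate manipulations where limits are taken along specific sequences $t_n$.
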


\begin{lemma}\label{L4}
Let $k\ge1$. If $0\le f_j,g_j\le 1$  for $ j=1,\ldots,k$, then
\[ \prod\nolimits_{j=1}^k(1-g_j)-\prod\nolimits_{j=1}^k(1-f_j)=\sum\nolimits_{j=1}^k (f_j-g_j)r_j, \]
where $0\le r_j\le1$ and
\begin{align*}
1-r_j=\sum\nolimits_{i=1}^{j-1}g_i+\sum\nolimits_{i=j+1}^{k}f_i-R_j,
\end{align*}
for some $R_j\ge0$.
If moreover, $f_j\le  q$ and $g_j\le  q$ for some $q>0$, then
$$1- r_j\le(k-1)q,\qquad R_j\le kq,\qquad R_j\le k^2q^2.$$
\end{lemma}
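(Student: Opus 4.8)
The plan is to establish the algebraic identity by a telescoping argument and then to read off both the representation of $1-r_j$ and the three numerical bounds from the elementary inclusion--exclusion (Bonferroni) inequalities; the whole argument is elementary, so the only real work is organising the index bookkeeping.

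First I would introduce the hybrid products
\[
P_j := \prod\nolimits_{i=1}^{j}(1-g_i)\prod\nolimits_{i=j+1}^{k}(1-f_i),\qquad j=0,1,\ldots,k,
\]
so that $P_0=\prod_{i=1}^k(1-f_i)$ and $P_k=\prod_{i=1}^k(1-g_i)$. Telescoping $P_k-P_0=\sum_{j=1}^k(P_j-P_{j-1})$ and factoring the common term $\prod_{i=1}^{j-1}(1-g_i)\prod_{i=j+1}^{k}(1-f_i)$ out of the two summands of $P_j-P_{j-1}$, the remaining bracket collapses to $(1-g_j)-(1-f_j)=f_j-g_j$. This produces exactly the stated identity with
\[
r_j=\prod\nolimits_{i=1}^{j-1}(1-g_i)\prod\nolimits_{i=j+1}^{k}(1-f_i),
\]
and since every factor lies in $[0,1]$, the bound $0\le r_j\le1$ is immediate.

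Next I would view $r_j$ as $\prod_{x\in S_j}(1-x)$, where $S_j$ is the family of the $k-1$ numbers $g_1,\ldots,g_{j-1},f_{j+1},\ldots,f_k$, and interpret each $x$ as the probability of an independent event. Then $1-r_j$ is the probability that at least one event occurs, so the first Bonferroni (union) inequality gives $1-r_j\le\sum_{x\in S_j}x=\sum_{i=1}^{j-1}g_i+\sum_{i=j+1}^{k}f_i$; defining $R_j$ as the nonnegative difference yields the claimed representation with $R_j\ge0$. The second Bonferroni inequality, together with independence, gives the matching upper bound $R_j\le\sum_{\{x,x'\}\subseteq S_j}xx'$, i.e.\ $R_j$ is at most the second elementary symmetric function of $S_j$.

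Finally, under $f_i,g_i\le q$ the three inequalities drop out by counting. The sum $\sum_{i=1}^{j-1}g_i+\sum_{i=j+1}^{k}f_i$ has $k-1$ terms, each at most $q$, so $1-r_j\le(k-1)q$, and since $R_j\le\sum_{x\in S_j}x$ as well, also $R_j\le(k-1)q\le kq$. The quadratic bound then follows from $R_j\le\sum_{\{x,x'\}\subseteq S_j}xx'\le\binom{k-1}{2}q^2\le k^2q^2$. I expect no genuine obstacle; the one point needing care is invoking the two Bonferroni inequalities in the correct directions so as to pin down simultaneously the sign $R_j\ge0$ and the second-order estimate, and keeping the index set $S_j$ straight through the telescoping.
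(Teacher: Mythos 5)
Your argument is correct and establishes every clause of the lemma, and its first half coincides with the paper's: the same choice $r_j=(1-g_1)\cdots(1-g_{j-1})(1-f_{j+1})\cdots(1-f_k)$ and the same telescoping (the paper writes the telescoping terms out one by one rather than packaging them as your hybrid products $P_j$). The genuine difference is in the second half. The paper does not define $R_j$ as a deficit; it produces an explicit formula for it by a second telescoping,
\[
R_j=\sum_{i=j+1}^{k}f_i\Bigl(1-\prod\nolimits_{l=j+1}^{i-1}(1-f_l)\Bigr)+\sum_{i=1}^{j-1}g_i\Bigl(1-\prod\nolimits_{l=1}^{i-1}(1-g_l)\prod\nolimits_{l=j+1}^{k}(1-f_l)\Bigr),
\]
from which $R_j\ge0$ is visible, $R_j\le\sum_{i=1}^{j-1}g_i+\sum_{i=j+1}^{k}f_i\le(k-1)q$ is immediate, and the quadratic bound follows from $1-(1-q)^i\le iq$, giving $R_j\le q^2\sum_{i=1}^{k-2}i\le k^2q^2$. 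You instead characterise $R_j$ implicitly and obtain $R_j\ge0$ from the union bound and $R_j\le\binom{k-1}{2}q^2$ from the degree-two Bonferroni inequality for independent events with probabilities $g_1,\ldots,g_{j-1},f_{j+1},\ldots,f_k$. The two routes are essentially equivalent in content---the paper's second telescoping is in effect a hands-on proof of exactly those two Bonferroni inequalities in the product case---but the paper's version is fully self-contained and hands you a closed-form $R_j$, whereas yours is shorter at the price of quoting Bonferroni as a black box (or, to stay algebraic, the inequalities $1-\prod_i(1-x_i)\le\sum_i x_i$ and $\prod_i(1-x_i)\le 1-\sum_i x_i+\sum_{i<i'}x_ix_{i'}$, which would need their own short induction). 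The constants and all three stated bounds come out the same either way, so the lemma's later uses in Sections 3.3 and 3.4, which rely only on the statement, are unaffected.
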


\begin{proposition}\label{Lx}
 If \eqref{ir}, \eqref{d}, and $a<\infty$ hold, then $\limsup_{t\to\infty} tQ(t)<\infty$.
 \end{proposition}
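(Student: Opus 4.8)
The plan is to convert the master identity of Lemma~\ref{fQd} into a one-sided recursion for $Q(t)$ and then bound its solution by means of a running maximum. Rearranging Lemma~\ref{fQd} gives
\[
Q(t)=\rP(L>t)+\rE_u\Big(\sum\nolimits_{j=1}^N Q(t-\tau_j)\Big)-\Phi(ht^{-1})+\rE_u(W(t))+D(u,t),
\]
and since $(1-z)^N-1+Nz\ge0$ by Bernoulli's inequality, we have $\Phi(ht^{-1})\ge0$; dropping this term yields an upper bound for $Q(t)$. I would fix a small $y\in(0,1)$ and take the truncation level $u=ty$, so that under $\rE_{ty}$ all birth ages obey $\tau_j\le ty$ and hence $t-\tau_j\ge(1-y)t$.

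The first block of work is to show that the correction terms are negligible at the scale $t^{-1}$. The tail term is immediate from \eqref{d}, namely $\rP(L>t)\sim dt^{-2}$. For $D(ty,t)$ in \eqref{Dut}, the first summand satisfies $\rE\big(1-\prod_j P(t-\tau_j);\,ty<L\le t\big)\le\rE(N;L>ty)=o(t^{-1})$ by Lemma~\ref{L3}, while the second summand is at most $\tfrac12 h^2t^{-2}\rE(N^2;L>ty)=o(t^{-2})$ because $\rE(N^2)<\infty$ under \eqref{ir}. The genuinely second-order quantity is $\rE_{ty}(W(t))$: using Lemma~\ref{L4} with $f_j=Q(t-\tau_j)$ and $g_j=ht^{-1}$ one rewrites \eqref{Wt} as $W(t)=-\sum_j\big(Q(t-\tau_j)-ht^{-1}\big)(1-r_j)$, which is where the negative feedback resides.

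For the principal term I would introduce the running maximum $m(t):=\sup_{s\le t}sQ(s)$. Since $t-\tau_j\le t$, we have $Q(t-\tau_j)\le m(t)/(t-\tau_j)$, and splitting $\frac{1}{t-\tau_j}=\frac1t+\big(\frac{1}{t-\tau_j}-\frac1t\big)$ and invoking Lemma~\ref{L2} gives
\[
\rE_{ty}\Big(\sum\nolimits_{j=1}^N Q(t-\tau_j)\Big)\le m(t)\Big(\tfrac1t\,\rE_{ty}(N)+at^{-2}(1+o(1))\Big)\le m(t)\big(\tfrac1t+at^{-2}(1+o(1))\big).
\]
Multiplying the upper bound for $Q(t)$ by $t$ then produces $tQ(t)\le m(t)\big(1+at^{-1}(1+o(1))\big)+o(1)$.

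The step I expect to be the main obstacle is closing this inequality. Criticality forces the multiplier of $m(t)$ to tend to $1$: the leading contributions cancel and the estimate $tQ(t)\le m(t)(1+o(1))+o(1)$ is, by itself, vacuous (evaluating it at a record time $tQ(t)=m(t)$ gives no information). Finiteness of $\limsup_t tQ(t)$ must therefore be recovered from the discarded negative term $-\Phi(ht^{-1})\sim -bh^2t^{-2}$ together with the second-order term $\rE_{ty}(W(t))$. The difficulty is that the crude absolute bounds $1-r_j\le (N-1)q$ and $R_j\le N^2q^2$ of Lemma~\ref{L4} (with $q=\max_j\max(Q(t-\tau_j),ht^{-1})$) destroy the cancellation and are too weak; one must instead evaluate $\rE_{ty}(W(t))$ \emph{with its sign}, using the exact expansion $1-r_j=(j-1)ht^{-1}+\sum_{i>j}Q(t-\tau_i)-R_j$ from Lemma~\ref{L4}, and balance it against $-\Phi(ht^{-1})$, so that the net second-order contribution is a negative multiple of $Q(t)^2$ of size governed by $\rE\binom{N}{2}=b$. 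This signed analysis, performed without presupposing monotonicity of $Q$ (so that the same scheme later transfers to Section~\ref{Lp1}), is the crux; combined with the nonnegativity of the corrections it then caps $m(t)$ and yields $\limsup_{t\to\infty}tQ(t)<\infty$.
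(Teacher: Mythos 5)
There is a genuine gap, and you have named it yourself: the ``signed analysis'' of $\rE_{ty}(W(t))$ that is supposed to produce the negative term of order $Q(t)^2$ and thereby cap the running maximum is never carried out --- it is deferred as ``the crux''. That deferred step is the entire difficulty of Proposition~\ref{Lx}; everything you do execute (dropping $\Phi(ht^{-1})\ge 0$, bounding $\rP(L>t)$, $D(ty,t)$, and the linear term via Lemma~\ref{L2}) is routine bookkeeping that, as you admit, ends in a vacuous inequality. Moreover, the scheme as written cannot be closed. First, your bound $D(ty,t)=o(t^{-1})$ (via Lemma~\ref{L3}) is too coarse: at a record time $t$ with $tQ(t)=m(t)=:M\to\infty$, the hoped-for inequality reads $bM^2t^{-2}(1+o(1))\le aMt^{-2}(1+o(1))+o(t^{-1})$, and if $M$ diverges slowly (say $M\sim\ln t$) the $o(t^{-1})$ error dwarfs $M^2t^{-2}$, so no contradiction follows; an error of size $O(t^{-2})$ is required, which at this stage of the paper is not available from \eqref{Dut} by your route (the paper's own proof that $D(ty,t)=o(t^{-2})$ uses \eqref{ca}, i.e.\ the very proposition being proved, though the cruder bound $1-\prod_j P(t-\tau_j)\le 1$ would save you here). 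Second, and more fundamentally, your side condition of ``not presupposing monotonicity of $Q$'' destroys the crux step itself: extracting a negative contribution $-bQ(t)^2$ from $\prod_j P(t-\tau_j)-1+\sum_j Q(t-\tau_j)$ requires lower-bounding each $Q(t-\tau_i)$ by $Q(t)$, which is exactly monotonicity. (This constraint is also based on a misreading: the paper's proof of Proposition~\ref{Lx} does use monotonicity of $Q$, and the non-monotone functions $Q_k$ of Section~\ref{Lp1} are handled later by the sandwich $(1-z_1)Q(t)\le Q_k(t)\le Q(t)$, not by redoing this argument without monotonicity.)

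The paper avoids all of this by working with a different identity. Instead of Lemma~\ref{fQd}, where $\Phi$ is anchored at the deterministic point $ht^{-1}$, it derives \eqref{Nad}, an identity for $\Phi(Q(t))$ --- the quadratic term in $Q(t)$ thus appears exactly, on the left-hand side, with no extraction needed --- and every term that cannot be controlled at this stage enters with a sign that allows it to be discarded, leaving \eqref{13} with the single error $\rP(L>u)=O(t^{-2})$. Then Topchii's record-time device finishes: if $t_n:=\min\{t:tQ(t)\ge n\}$ were finite for all $n$, then $Q(t_n)\ge n/t_n$ while $Q(t_n-u)<n/(t_n-u)$, so by \eqref{13}, Lemma~\ref{L2} and \eqref{d} the right-hand side is $O(nt_n^{-2})$, whereas $\Phi(Q(t_n))\ge\Phi(nt_n^{-1})\sim bn^2t_n^{-2}$ by \eqref{L1}; the contradiction $n^2=O(n)$ is immediate, with no need to analyse $W(t)$ or $D(u,t)$ at all.
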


\begin{proposition}\label{Ly}
 If \eqref{ir}, \eqref{d}, and $a<\infty$ hold, then $\liminf_{t\to\infty} tQ(t)>0$.
 \end{proposition}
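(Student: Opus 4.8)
The plan is to bypass the delicate generating-function analysis used for the matching upper bound and instead obtain the lower bound by a second-moment (Paley--Zygmund) argument, which has the advantage of referring neither to the monotonicity nor to the fine asymptotics of $Q(\cdot)$. Writing $m(t):=\rE(Z(t))$ and $V(t):=\rE(Z(t)^2)$, the starting point is the elementary inequality
\[
Q(t)=\rP(Z(t)>0)\ge\frac{(\rE Z(t))^2}{\rE(Z(t)^2)}=\frac{m(t)^2}{V(t)},
\]
so it suffices to show that $\liminf_{t\to\infty}m(t)>0$ and $V(t)=O(t)$; together these give $tQ(t)\ge t\,m(t)^2/V(t)\ge\mathrm{const}>0$ for all large $t$.

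For the first moment, the branching decomposition \eqref{CD} yields the renewal equation $m(t)=\rP(L>t)+\rE\bigl(\sum_{j=1}^{N}m(t-\tau_j)\bigr)$, that is $m=z*U$ with $z(t)=\rP(L>t)$ and renewal measure $U=\sum_{n\ge0}\mu^{*n}$ generated by the offspring birth-position measure $\mu(\cdot)=\rE(\#\{j:\tau_j\in\cdot\})$. Condition \eqref{ir} makes $\mu$ a probability measure with mean $a$, while condition \eqref{d} gives $\rP(L>t)=O(t^{-2})$, hence $\rE(L)<\infty$ and $z$ summable. The renewal theorem then yields $m(t)\to\rE(L)/a\in(0,\infty)$, so in particular $\sup_t m(t)<\infty$ and $\inf_{t\ge t_0}m(t)>0$.

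For the second moment I would condition \eqref{CD} on $(L,N,\tau_1,\dots,\tau_N)$ and use that the daughter processes are i.i.d.\ copies of $Z$; this produces the renewal equation $V(t)=g(t)+\rE\bigl(\sum_{j=1}^{N}V(t-\tau_j)\bigr)$ driven by the same $\mu$, with nonnegative forcing
\[
g(t)=\rP(L>t)+2\,\rE\Bigl(\textstyle\sum_{j} m(t-\tau_j);\,L>t\Bigr)+\rE\Bigl(\textstyle\sum_{i\ne j} m(t-\tau_i)m(t-\tau_j)\Bigr).
\]
The crucial point is that $g$ is bounded: the middle term is at most $2\sup_s m(s)\cdot\rE(N;L>t)$, which is finite (and in fact $o(t^{-1})$ by Lemma \ref{L3}), while the cross term is at most $(\sup_s m(s))^2\,\rE(N(N-1))=2b\,(\sup_s m(s))^2<\infty$ precisely because of the finite-variance assumption in \eqref{ir}. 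Solving gives $V=g*U$, and since $g\le G$ for some constant $G$ and the elementary renewal theorem gives $U([0,t])\sim t/a$ (using $a<\infty$), we conclude
\[
V(t)\le G\,U([0,t])=O(t).
\]

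Combining the three displays finishes the proof, and the argument is notably independent of Proposition \ref{Lx}. The main obstacle is the boundedness of the forcing term $g$: it is exactly the second-order cross term $\rE(\sum_{i\ne j}m(t-\tau_i)m(t-\tau_j))$ that would blow up without the finite second moment of $N$, and one must first secure $\sup_t m(t)<\infty$ (equivalently $\rE(L)<\infty$, which is where \eqref{d} enters) before this term can be controlled. Everything else reduces to routine applications of the renewal theorem.
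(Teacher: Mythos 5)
Your strategy is sound and it is genuinely different from the paper's. The paper proves this proposition by contradiction along the sequence $t_n=\min\{t:tQ(t)\le n^{-1}\}$, starting from the generating-function identity \eqref{Nad}, decomposing $P(t)^{N}-\prod_j P(t-\tau_j)$ via Lemma \ref{L4} together with the monotonicity of $P(\cdot)$, and then invoking Lemmas \ref{L2} and \ref{L3}, the asymptotics \eqref{L1}, and --- crucially --- the upper bound $q_2$ of \eqref{ca} already supplied by Proposition \ref{Lx}, to reach the false inequality $0\ge a-yq_2\rE(N^2)(1-y)^{-2}$. Your second-moment argument needs none of this: Cauchy--Schwarz gives $Q(t)\ge m(t)^2/V(t)$, and everything reduces to two renewal equations driven by the same measure $\mu$. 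This buys modularity (the lower bound is decoupled from Proposition \ref{Lx}, from monotonicity of $Q$, and from $\Phi$), and the key structural point is exactly where you place it: the forcing term of the second-moment equation is bounded only because $\rE(N(N-1))=2b<\infty$ and $\sup_s m(s)<\infty$, so \eqref{ir} and \eqref{d} enter in a transparent way. What your route does not produce is the machinery (Lemma \ref{fQd}, the $W$- and $D$-terms, Lemma \ref{L4}) that the paper reuses to get the sharp limit $tQ(t)\to h$ and then Theorem \ref{thL}; within the paper's economy the heavier proof pays for itself, but as a proof of this one proposition yours is more elementary.

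There is one genuine gap: the step ``the renewal theorem then yields $m(t)\to\rE(L)/a$''. The discrete (Erd\H{o}s--Feller--Pollard) renewal theorem requires $\mu$ to be aperiodic, and nothing in \eqref{ir}, \eqref{d}, $a<\infty$ rules out periodicity. For instance, $\rP(N=0)=\rP(N=2)=\tfrac12$, $\tau_j\equiv2$, $L\equiv3$ satisfies all hypotheses, $\mu=\delta_2$, and $m(t)$ alternates between $1$ and $2$, so it does not converge. (Incidentally, this example also shows that $\rE(Z(t))$ is \emph{not} identically $1$ for a general GWO-process, so the first-moment analysis really must be carried out; it cannot be waved away.) Since you use $\sup_s m(s)<\infty$ to bound $g$ and $\liminf_t m(t)>0$ in the final display, both bounds need a proof that avoids aperiodicity. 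Both are available. For the upper bound, note $u(t)=\sum_{n\ge0}\mu^{*n}(\{t\})\le1$, because a random walk with steps $\ge1$ visits each integer at most once; hence $m(t)=\sum_{s\le t}u(s)\rP(L>t-s)\le\sum_{r\ge0}\rP(L>r)=\rE(L)<\infty$ by \eqref{d}. For the lower bound, let $p$ be the period of $\mu$; writing $t=pk+r$ with $0\le r<p$, dominated convergence and $u(pk)\to p/a$ give $m(pk+r)\to (p/a)\sum_{i\ge0}\rP(L>pi+r)\ge (p/a)\rP(L\ge p)\ge (p/a)\rP(N\ge1)>0$, where the last two inequalities use that $L\ge\tau_1\ge p$ on the event $\{N\ge1\}$ and $\rE(N)=1$. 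With $0<\liminf_t m(t)\le\sup_t m(t)<\infty$ secured this way, the rest of your argument goes through verbatim; note also that $U([0,t])\le t+1$ (again from $u\le1$) already gives $V(t)=O(t)$, so even the elementary renewal theorem is dispensable.
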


\vspace{0.25cm}
According to these two propositions, there exists a triplet of positive numbers $(q_1,q_2,t_0)$ such that 
\begin{equation}\label{ca}
q_1\le tQ(t)\le q_2,\quad t\ge t_0,\quad 0<q_1<h<q_2<\infty.
\end{equation}
The claim $tQ(t)\to h$ is derived using \eqref{ca} by accurately removing asymptotically negligible terms from the relation for $Q(\cdot)$ stated in Lemma 1, after setting $u=ty$ with a fixed $0<y<1$, and then choosing a sufficiently small $y$. In particular, as an intermediate step, we will show that
\begin{align}
Q(t)= \rE_{ty}\Big(\sum\nolimits_{j=1}^{N}Q(t-\tau_j)\Big)+\rE_{ty}(W(t))-aht^{-2}+o(t^{-2}),\quad t\to\infty. \label{rys}
\end{align}
Then, restating our goal as $\phi(t)\to 0$ in terms of the function $\phi(t)$, defined by
\begin{equation}\label{cal}
Q(t)=\frac{h +\phi(t)}{t},\quad t\ge1,
\end{equation}
we rewrite \eqref{rys} as
 \begin{align} 
\frac{h +\phi(t)}{t}&= \rE_{ty}\Big(\sum\nolimits_{j=1}^{N}\frac{h +\phi(t-\tau_j)}{t-\tau_j}\Big)+\rE_{ty}(W(t))-aht^{-2}+o(t^{-2}),\quad t\to\infty. \label{eye}
\end{align}
It turns out that the three terms involving $h$, outside $W(t)$, effectively cancel each other, yielding
\begin{align}
\frac{\phi(t)}{t}&= \rE_{ty}\Big(\sum\nolimits_{j=1}^{N}\frac{\phi(t-\tau_j)}{t-\tau_j}+W(t)\Big)+o(t^{-2}),\quad t\to\infty.\label{luh}
\end{align}

Treating $W(t)$ in terms of Lemma \ref{L4}, brings
\begin{align}
 \phi(t)&= \rE_{ty}\Big(\sum\nolimits_{j=1}^{N}\phi(t-\tau_j) r_j(t)\frac{t}{t-\tau_j}\Big)+o(t^{-1}), \label{afo}
\end{align}
where $r_j(t)$ is a counterpart of $r_j$ in Lemma \ref{L4}.
To derive from here the desired convergence $\phi(t)\to0$, we will adapt a clever trick from Chapter 9.1 of \cite{Seva}, which was further developed in  \cite{V79} for the Bellman-Harris process, with possibly  infinite $\rV(N)$.  Define a non-negative function $m(t)$ by
\begin{align}
m(t):=|\phi(t)|\, \ln t,\quad t\ge 2.  \label{mt}
\end{align}
  Multiplying \eqref{afo} by $\ln t$ and using the triangle inequality, we obtain
\begin{align}
 m(t)\le \rE_{ty}\Big(\sum\nolimits_{j=1}^{N} m(t-\tau_j)r_j(t) \frac{t\ln t}{(t-\tau_j)\ln(t-\tau_j)}\Big)+v(t),\label{elin}
\end{align}
where $v(t)\ge 0$ and $v(t)=o(t^{-1}\ln t)$ as $t\to\infty$. It will be shown that this leads to $m(t)=o(\ln t)$, thereby concluding the proof of \eqref{ad}.

\subsection{Proof of lemmas and  propositions}\label{lemmas}

\begin{proof} {\sc of Lemma \ref{fQd}}. 
For $0<u\le t$, relations \eqref{ejp} and  \eqref{Et} give
\begin{align}\label{Qln}
P(t)=\rE_u\Big(\prod\nolimits_{j=1}^{N}P(t-\tau_j) \Big)+\rE\Big(\prod\nolimits_{j=1}^{N}P(t-\tau_j);u<L\le  t\Big).
\end{align}
On the other hand, for $t\ge h$,
\begin{align*} 
\Phi(ht^{-1})
&\stackrel{\eqref{AL}}{=}\rE_u\Big((1-ht^{-1})^{N}-1+Nht^{-1}\Big)+\rE\Big((1-ht^{-1})^{N}-1 +Nht^{-1};L> u\Big).
\end{align*}
Adding the latter relation  to
\begin{align*} 
1
&=\rP(L\le u)+\rP(L> t)+\rP(u<L\le t),
\end{align*}
and subtracting \eqref{Qln} from the sum, we get
\begin{align*}
\Phi(ht^{-1})+Q(t)=\rE_u\Big((1-ht^{-1})^{N} +Nht^{-1}-\prod\nolimits_{j=1}^{N}P(t-\tau_j)\Big)+\rP(L> t)+D(u,t),
\end{align*}
with $D(u,t)$ defined by \eqref{Dut}.
After a rearrangement, we obtain the statement of the lemma.
\end{proof}

\begin{proof}   {\sc of Lemma \ref{L3}}.
For any fixed $\epsilon>0$,
 \begin{align*}
\rE(N;L>t)=\rE(N;N\le t\epsilon,L>t)+\rE(N;1<N(t\epsilon)^{-1},L>t)\le t\epsilon\rP(L>t)+(t\epsilon)^{-1}\rE(N^2;L>t).
\end{align*}
Thus, by \eqref{ir} and \eqref{d},
 \begin{align*}
\limsup_{t\to\infty} (t\rE(N;L>t))\le d\epsilon,
\end{align*}
and the assertion follows as $\epsilon\to0$.
\end{proof}

\begin{proof} {\sc of Lemma \ref{L2}}.
For $t=1,2,\ldots$ and $y>0$, put
\begin{align*}
B_t(y)&:= t^2\,\rE_{ty}\Big(\sum\nolimits_{j=1}^{N}\Big(\frac{1}{t-\tau_j}-\frac{1}{t}\Big)\Big)-a.
 \end{align*}
For any  $0<u<ty$, using
 \[a=\rE_u(\tau_1+\ldots+\tau_N)+A_u,\quad A_u:=\rE(\tau_1+\ldots+\tau_N;L> u),\]
we get
\begin{align*}
B_t(y)&= \rE_u\Big(\sum\nolimits_{j=1}^{N} \frac{t}{t-\tau_j}\tau_j\Big)+\rE\Big(\sum\nolimits_{j=1}^{N} \frac{t}{t-\tau_j}\tau_j\,;u<L\le ty\Big)-\rE_u(\tau_1+\ldots+\tau_N)-A_u
\\
 &=\rE\Big(\sum\nolimits_{j=1}^{N}\frac{\tau_j}{1-\tau_j/t};u<L\le ty\Big)+\rE_u\Big(\sum\nolimits_{j=1}^{N}\frac{\tau_j^2}{t-\tau_j}\Big)-A_u.
\end{align*}
For the first term on the right hand side, we have $\tau_j\le L\le ty$, so that
\begin{align*}
\rE\Big(\sum\nolimits_{j=1}^{N}\frac{\tau_j}{1-\tau_j/t};u<L\le ty\Big)\le(1-y)^{-1}A_u.
\end{align*}
For the second term, $\tau_j\le L\le u$ and therefore
\begin{align*}
\rE_u\Big(\sum\nolimits_{j=1}^{N}\frac{\tau_j^2}{t-\tau_j}\Big)\le\frac{u^2}{t-u}\rE_u(N)\le\frac{u^2}{t-u}.
\end{align*}

This yields
\[-A_u\le B_t(y)\le (1-y)^{-1}A_u+\frac{u^2}{t-u},\quad 0<u<ty<t,\]
implying
\[-A_u\le \liminf_{t\to\infty} B_t(y)\le\limsup_{t\to\infty} B_t(y)\le (1-y)^{-1}A_u.\]
Since $A_u\to0$ as $u\to\infty$, we conclude that  $B_t\to0$ as $t\to\infty$.
\end{proof}

\begin{proof} {\sc of Lemma \ref{L4}}.
Let 
\begin{equation*}
r_j:=(1-g_1)\ldots (1-g_{j-1})(1-f_{j+1})\ldots (1-f_k),\quad 1\le j\le k.
\end{equation*}
Then $0\le r_j\le1$ and the first stated equality is obtained by telescopic summation of
\begin{align*}
 (1-g_1)\prod\nolimits_{j=2}^{k}(1-f_j)-\prod\nolimits_{j=1}^k(1-f_j)&=(f_1-g_1)r_1,\\
 (1-g_1)(1-g_2)\prod\nolimits_{j=3}^{k}(1-f_j)- (1-g_1)\prod\nolimits_{j=2}^{k}(1-f_j)&=(f_2-g_2)r_2,\ldots,\\
\prod\nolimits_{j=1}^{k}(1-g_j)-\prod\nolimits_{j=1}^{k-1}(1-g_j)(1-f_k)&=(f_k-g_k)r_k.
\end{align*}
The second stated equality is obtained with
\begin{align*}
R_j&:=\sum_{i=j+1}^{k}f_i(1-(1-f_{j+1})\ldots (1-f_{i-1}))+\sum_{i=1}^{j-1}g_i(1-(1-g_1)\ldots (1-g_{i-1})(1-f_{j+1})\ldots (1-f_k)),
\end{align*}
by performing telescopic summation of
\begin{align*}
 1-(1-f_{j+1})&=f_{j+1},\\
(1-f_{j+1})-(1-f_{j+1})(1-f_{j+2})&=f_{j+2}(1-f_{j+1}),\ldots,\\
 \prod\nolimits_{i=j+1}^{k-1}(1-f_i)- \prod\nolimits_{i=j+1}^{k}(1-f_i)&=f_k\prod\nolimits_{i=j+1}^{k-1}(1-f_i),\\
  \prod\nolimits_{i=j+1}^{k}(1-f_i)-(1-g_1)\prod\nolimits_{i=j+1}^{k}(1-f_i)&=g_1\prod\nolimits_{i=j+1}^{k}(1-f_i),\ldots,\\
  \prod\nolimits_{i=1}^{j-2}(1-g_i)\prod\nolimits_{i=j+1}^{k}(1-f_i)- \prod\nolimits_{i=1}^{j-1}(1-g_i)\prod\nolimits_{i=j+1}^{k}(1-f_i)&=g_{j-1} \prod\nolimits_{i=1}^{j-2}(1-g_i)\prod\nolimits_{i=j+1}^{k}(1-f_i).
\end{align*}

By the above definition of $R_j$, we have $R_j\ge0$. Furthermore, given $f_j\le  q$ and $g_j\le  q$, we get
\[R_j\le \sum\nolimits_{i=1}^{j-1}g_i+\sum\nolimits_{i=j+1}^{k}f_i\le (k-1)q.
\]
It remains to observe that
\begin{align*}
1-r_j\le 1-(1-q)^{k-1}\le (k-1)q,
\end{align*}
and from the definition of $R_j$,
\[R_j\le q\sum\nolimits_{i=1}^{k-j-1}(1-(1-q)^{i})+q\sum\nolimits_{i=1}^{j-1}(1-(1-q)^{k-j+i-1})\le q^2\sum\nolimits_{i=1}^{k-2}i\le k^2q^2.\]
\end{proof}

\begin{proof} {\sc of Proposition \ref{Lx}}.
By the definition of $\Phi(\cdot)$, we have
$$\Phi(Q(t))+P(t)=\rE_u\Big(P(t)^{N} \Big)+\rP(L> u)-\rE\Big(1-P(t)^ N;\,L>  u\Big),$$
for any $0<u<t$. This and \eqref{Qln} yield
 \begin{align}
\Phi(Q(t))&=\rE_u\Big(P(t)^{N}-\prod\nolimits_{j=1}^{N}P(t-\tau_j)\Big)+\rP(L>  u) \nonumber\\
&-\rE\Big(1-P(t)^ N;\,L>  u\Big)-\rE\Big(\prod\nolimits_{j=1}^{N}P(t-\tau_j);u<L\le  t\Big). \label{Nad}
\end{align}
An upper bound follows 
\begin{align*}
 \Phi(Q(t))&\le \rE_u\Big(P(t)^{N}-\prod\nolimits_{j=1}^{N}P(t-\tau_j)\Big)+\rP(L>  u),
\end{align*}
which together with Lemma \ref{L4} and monotonicity of $Q(\cdot)$ entail
\begin{align}\label{13}
 \Phi(Q(t))\le \rE_u\Big(\sum\nolimits_{j=1}^{N}(Q(t-\tau_j)-Q(t))\Big)+\rP(L>u).
\end{align}

Borrowing an idea from \cite{T}, suppose, on the contrary,  that 
$$t_n:=\min\{t: tQ(t)\ge n\}$$
 is finite for any natural $n$. It follows that 
$$Q(t_n)\ge \frac{n}{t_n},\qquad Q(t_n-u)<\frac{n}{t_n-u},\quad 1\le u\le t_n-1.$$ 
Putting $t=t_n$ into \eqref{13} and using monotonicity of $\Phi(\cdot)$, we find
\begin{eqnarray*}
 \Phi(nt_n^{-1})\le \Phi(Q(t_n))\le \rE_u\Big(\sum\nolimits_{j=1}^{N}\Big(\frac{n}{t_n-\tau_j}-\frac{n}{t_n}\Big)\Big)+\rP(L>  u).
\end{eqnarray*}
Setting here $u=t_n/2$ and applying Lemma \ref{L2} together with \eqref{d}, we arrive at the relation
$$\Phi(nt_n^{-1})=O(nt_n^{-2}),\quad n\to\infty.$$
Observe that under condition  \eqref{ir}, the L'Hospital rule gives
\begin{equation}\label{L1}
\Phi(z)\sim bz^2,\quad z\to0.
\end{equation}
The resulting contradiction, $n^{2}t_n^{-2}=O(nt_n^{-2})$ as $n\to\infty$, finishes the proof of the proposition. 
\end{proof}

\begin{proof} {\sc of Proposition \ref{Ly}}.
Relation \eqref{Nad} implies
\begin{align*}
 \Phi(Q(t))\ge \rE_u\Big(P(t)^{N}-\prod\nolimits_{j=1}^{N}P(t-\tau_j)\Big)-\rE\Big(1-P(t)^ N;\,L>  u\Big).
\end{align*}
By Lemma \ref{L4},
\begin{align*}
P(t)^{N}-\prod\nolimits_{j=1}^{N}P(t-\tau_j)= \sum_{j=1}^{N}(Q(t-\tau_j)-Q(t))r_j^*(t),
\end{align*}
where $0\le r_j^*(t)\le 1$ is a counterpart of term $r_j$ in Lemma \ref{L4}. Due to monotonicity of $P(\cdot)$, we have, again referring to Lemma \ref{L4},
$$1-r_j^*(t)\le (N-1)Q(t-L).$$ 
Thus, for $0<y<1$, 
\begin{align}\label{cont}
 \Phi(Q(t))&\ge \rE_{ty}\Big(\sum_{j=1}^{N}(Q(t-\tau_j)-Q(t))r_j^*(t) \Big)-\rE\Big(1-P(t)^ N;\,L>  ty\Big).
 \end{align}

The assertion $\liminf_{t\to\infty} tQ(t)>0$ is proven by contradiction. Assume that $\liminf_{t\to\infty} tQ(t)=0$, so that
$$t_n:=\min\{t: tQ(t)\le n^{-1}\}$$
is finite for any natural $n$. Plugging $t=t_n$ in \eqref{cont} and using
$$Q(t_n)\le \frac{1}{nt_n},\quad Q(t_n-u)-Q(t_n)\ge \frac{1}{n(t_n-u)}-\frac{1}{nt_n},\quad 1\le u\le t_n-1,$$
we get
$$\Phi\Big(\frac{1}{nt_n}\Big)\ge n^{-1}\rE_{t_ny}\Big(\sum\nolimits_{j=1}^{N}\Big(\frac{1}{t_n-\tau_j}-\frac{1}{t_n}\Big)r_j^*(t_n)\Big)-\frac{1}{nt_n}\rE(N;\,L> t_ny).$$
Given $L\le ty$, we have
\begin{align*}
1-r_j^*(t)\le NQ(t(1-y))\le N\frac{q_2}{t(1-y)},
 \end{align*}
 where the second inequality is based on the already proven part of \eqref{ca}. Therefore,
$$\rE_{t_ny}\Big(\sum\nolimits_{j=1}^{N}\Big(\frac{1}{t_n-\tau_j}-\frac{1}{t_n}\Big)(1-r_j^*(t_n))\Big)\le \frac{q_2y}{t_n^2(1-y)^2}\rE(N^2),$$
and we derive
\begin{align*}
 nt_n^2\Phi(\tfrac{1}{nt_n})&\ge  t_n^2\rE_{t_ny}\Big(\sum\nolimits_{j=1}^{N}\Big(\frac{1}{t_n-\tau_j}-\frac{1}{t_n}\Big)\Big)
 -\frac{\rE(N^2)q_2y}{(1-y)^2}-t_n\rE(N;\,L> t_ny).
\end{align*}
Sending $n\to\infty$ and applying \eqref{L1}, Lemma \ref{L3}, and Lemma \ref{L2},
we arrive at the inequality
$$0\ge a-yq_2\rE(N^2)(1-y)^{-2},\quad 0<y<1,$$
which is false for sufficiently small $y$.
\end{proof}

\subsection{Proof of \eqref{luh} and \eqref{afo}}\label{end}

Fix an arbitrary $0<y<1$.
Lemma 1 with $u=ty$, gives
\begin{align}
\Phi(h t^{-1})= \rP(L> t)+\rE_{ty}\Big(\sum\nolimits_{j=1}^{N}Q(t-\tau_j)\Big)-Q(t)+\rE_{ty}(W(t))+D(ty,t). \label{mam}
\end{align}
Let us show that
\begin{align}
D(ty,t)=o(t^{-2}),\quad t\to\infty. \label{cov}
\end{align}
Using Lemma \ref{L3} and \eqref{ca}, we find that for an arbitrarily small $\epsilon>0$, 
\[\rE\Big(1-\prod\nolimits_{j=1}^{N}P(t-\tau_j);\,ty<L\le t(1-\epsilon)\Big)=o(t^{-2}),\quad t\to\infty.\]
On the other hand,
   \begin{align*} 
\rE\Big(1-\prod\nolimits_{j=1}^{N}P(t-\tau_j);\,t(1-\epsilon)<L\le t\Big)\le \rP(t(1-\epsilon)<L\le t),
\end{align*}
so that in view of \eqref{d},
\[\rE\Big(1-\prod\nolimits_{j=1}^{N}P(t-\tau_j);\,ty<L\le t\Big)=o(t^{-2}),\quad t\to\infty.\]
This,  \eqref{Dut} and Lemma \ref{L3} entail \eqref{cov}.

Observe that
\begin{equation}\label{stop}
bh^2=ah+d.
\end{equation}
Combining \eqref{mam}, \eqref{cov},
and
$$\rP(L> t)-\Phi(h t^{-1})\stackrel{\eqref{d}\eqref{L1}}{=}dt^{-2}-bh^2t^{-2}+o(t^{-2})\stackrel{\eqref{stop}}{=}-aht^{-2}+o(t^{-2}),\quad t\to\infty,$$
we derive \eqref{rys}, which in turn gives \eqref{eye}. The latter implies \eqref{luh} since by Lemmas \ref{L3} and \ref{L4},
\[ \rE_{ty}\Big(\sum\nolimits_{j=1}^{N}\frac{h }{t-\tau_j}\Big)-\frac{h}{t}=\rE_{ty}\Big(\sum\nolimits_{j=1}^{N}\Big(\frac{h }{t-\tau_j}-\frac{h}{t}\Big)\Big)
-ht^{-1}\rE(N;L> ty)=aht^{-2}+o(t^{-2}).
\]
 
Turning to the proof of \eqref{afo}, observe that the random variable
$$
W(t)=(1-h t^{-1})^{N}-\prod\nolimits_{j=1}^{N}\Big(1-\frac{h +\phi(t-\tau_j)}{t-\tau_j}\Big)+\sum\nolimits_{j=1}^{N}\Big(\frac{h }{t}-\frac{h +\phi(t-\tau_j)}{t-\tau_j}\Big),
$$
can be represented in terms of Lemma \ref{L4} as
$$ 
W(t)=\prod\nolimits_{j=1}^{N}(1-f_j(t))-\prod\nolimits_{j=1}^{N}(1-g_j(t))+\sum\nolimits_{j=1}^{N}(f_j(t)-g_j(t))=\sum\nolimits_{j=1}^{N}(1-r_j(t))(f_j(t)-g_j(t)),
$$
by assigning
\begin{align}\label{sal}
f_j(t):=h t^{-1},\quad g_j(t):=\frac{h +\phi(t-\tau_j)}{t-\tau_j}.
\end{align}
Here $0\le r_j(t)\le 1$ and for sufficiently large $t$,
\begin{align}\label{stal}
1-r_j(t)\stackrel{ \eqref{ca}}{\le} Nq_2t^{-1}.
\end{align}
After plugging into  \eqref{luh} the expression
$$ 
W(t)=\sum\nolimits_{j=1}^{N}\Big(\frac{h }{t}-\frac{h }{t-\tau_j}\Big)(1-r_j(t))-\sum\nolimits_{j=1}^{N}\frac{\phi(t-\tau_j)}{t-\tau_j}(1-r_j(t)),
$$
we get
\begin{align*}
\frac{\phi(t)}{t}&= \rE_{ty}\Big(\sum\nolimits_{j=1}^{N}\frac{\phi(t-\tau_j)}{t-\tau_j}r_j(t)\Big)+\rE_{ty}\Big(\sum\nolimits_{j=1}^{N}\Big(\frac{h }{t-\tau_j}-\frac{h}{t}\Big)(1-r_j(t) )\Big)+o(t^{-2}),\quad t\to\infty. 
\end{align*}
The latter expectation is non-negative, and for an arbitrary $\epsilon>0$, it has the following upper bound
\begin{align*}
 \rE_{ty}\Big(\sum\nolimits_{j=1}^{N}\Big(\frac{h }{t-\tau_j}-\frac{h}{t}\Big)(1-r_j(t) )\Big)
\stackrel{ \eqref{stal}}{\le} q_2\epsilon\rE_{ty}\Big(\sum\nolimits_{j=1}^{N}\Big(\frac{h }{t-\tau_j}-\frac{h}{t} \Big)\Big)+\frac{q_2h}{(1-y)t^2}\rE(N^2;N>  t\epsilon).
\end{align*}
Thus, in view of Lemma \ref{L2},
\begin{align*}
\frac{\phi(t)}{t}&= \rE_{ty}\Big(\sum\nolimits_{j=1}^{N}\frac{\phi(t-\tau_j)}{t-\tau_j}r_j(t)\Big)+o(t^{-2}),\quad t\to\infty. 
\end{align*}
Multiplying this relation by $t$, we arrive at \eqref{afo}.

\subsection{Proof of $\phi(t)\to 0$}\label{end}

Recall \eqref{mt}. If  the non-decreasing function 
$$M(t):=\max_{1\le j\le t} m(j)$$ 
is bounded from above, then $\phi(t)=O(\frac{1}{\ln t})$ proving that $\phi(t)\to 0$ as $t\to\infty$. If $M(t)\to\infty$ as $t\to\infty$, then there is an integer-valued sequence $0<t_1<t_2<\ldots,$  such that the sequence $M_n:=M(t_n)$ is strictly increasing and converges to infinity.
In this case,
\begin{equation}\label{liv}
m(t)\le M_{n-1}<M_n,\quad 1\le t< t_n,\quad m(t_n)=M_n,\quad n\ge1.
\end{equation}
Since $|\phi(t)|\le \frac{M_{n}}{\ln t_{n}}$ for $t_n\le t<t_{n+1}$, to finish the proof of $\phi(t)\to 0$, it remains to verify that 
\begin{equation}\label{dog}
 M_{n}=o(\ln t_{n}),\quad n\to\infty.
\end{equation}

Fix an arbitrary $y\in(0,1)$. Putting $t=t_n$ in \eqref{elin} and using \eqref{liv}, we find
\begin{align*}
M_n\le M_n\rE_{t_ny}\Big(\sum\nolimits_{j=1}^{N}r_j(t_n)\frac{t_n\ln t_n}{(t_n-\tau_j)\ln(t_n-\tau_j)}\Big)+(t_n^{-1}\ln t_n)o_n.
\end{align*}
Here and elsewhere, $o_n$  stands for a non-negative sequence  such that $o_n\to0$ as $n\to\infty$. In different formulas, the sign $o_n$ represents different such sequences.
Since
$$
0\le  \frac{t\ln t}{(t-u)\ln (t-u)}-1\le  \frac{u(1+\ln t)}{(t-u)\ln (t-u)},\quad 0\le u< t-1,
$$
and $r_j(t_n)\in[0,1]$, it follows that
\begin{align*}
M_n-M_n\rE_{t_ny}\Big(\sum\nolimits_{j=1}^{N}r_j(t_n)\Big)&\le M_n\rE_{t_ny}\Big(\sum\nolimits_{j=1}^{N}\frac{\tau_j(1+\ln t_n)}{t_n(1-y)\ln (t_n(1-y))}\Big)+(t_n^{-1}\ln t_n)o_n.
\end{align*}
Recalling that $a=\rE(\sum_{j=1}^{N}\tau_j)$, observe that
\begin{align*}
\rE_{t_ny}\Big(\sum\nolimits_{j=1}^{N}\frac{\tau_j(1+\ln t_n)}{t_n(1-y)\ln (t_n(1-y))}\Big)\le \frac{a(1+\ln t_n)}{t_n(1-y)\ln (t_n(1-y))}
= (a(1-y)^{-1}+o_n)t_n^{-1}.
\end{align*}
Combining the last two relations, we conclude
\begin{align}\label{alt}
M_n\rE_{t_ny}\Big(\sum\nolimits_{j=1}^{N}(1-r_j(t_n))\Big)&\le a(1-y)^{-1}t_n^{-1}M_n +t_n^{-1}(M_n+\ln t_n)o_n.
\end{align}

Now it is time to unpack the term $r_j(t)$. By Lemma \ref{L4} with \eqref{sal},
%
$$
1-r_j(t)=\sum_{i=1}^{j-1}\frac{h +\phi(t-\tau_i)}{t-\tau_i}+(N-j)\frac{h }{t}-R_j(t),
$$
where provided $\tau_j\le ty$, 
$$
0\le R_j(t)\le Nq_2t^{-1}(1-y)^{-1},\quad R_j(t)\le N^2q_2^2t^{-2}(1-y)^{-2},\quad t>t^*,
$$
for a sufficiently large $t^*$. This allows us to rewrite \eqref{alt} in the form
\begin{align*}
M_n\rE_{t_ny}\Big(\sum\nolimits_{j=1}^{N}&\Big(\sum_{i=1}^{j-1}\frac{h +\phi(t_n-\tau_i)}{t_n-\tau_i}+(N-j)\frac{h }{t_n}\Big)\Big)\\
&\le M_n\rE_{t_ny}\Big(\sum\nolimits_{j=1}^{N} R_j(t_n)\Big)+a(1-y)^{-1}t_n^{-1}M_n +t_n^{-1}(M_n+\ln t_n)o_n.
\end{align*}
To estimate the last expectation, observe that if $\tau_j\le ty$, then for any $\epsilon>0$,
$$
R_j(t)\le Nq_2t^{-1}(1-y)^{-1} 1_{\{N>t\epsilon\}}+ N^2q_2^2t^{-2} (1-y)^{-2}1_{\{N\le t\epsilon\}},\quad t>t^*.
$$
implying that for sufficiently large $n$,
$$\rE_{t_ny}\Big(\sum\nolimits_{j=1}^{N}R_j(t_n)\Big)
\le q_2t_n^{-1}(1-y)^{-1}\rE(N^{2} ; N>  t_n\epsilon)+ q_2^2\epsilon t_n^{-1}(1-y)^{-2}\rE(N^2),$$
so that
\begin{align*}
M_n\rE_{t_ny}\Big(\sum\nolimits_{j=1}^{N}\Big(\sum\nolimits_{i=1}^{j-1}\frac{h +\phi(t_n-\tau_i)}{t_n-\tau_i}+(N-j)\frac{h }{t_n}\Big)\Big)
\le a(1-y)^{-1}t_n^{-1}M_n +t_n^{-1}(M_n+\ln t_n)o_n.
\end{align*}
Since
\[\sum\nolimits_{j=1}^{N}\sum\nolimits_{i=1}^{j-1}\Big(\frac{h}{t_n-\tau_i}- \frac{h }{t_n}\Big)\ge0,\]
we obtain
\begin{align*}
M_n\rE_{t_ny}\Big(\sum\nolimits_{j=1}^{N}\Big(\sum_{i=1}^{j-1}\frac{\phi(t_n-\tau_i)}{t_n-\tau_i}+(N-1)\frac{h }{t_n}\Big)\Big)
\le a(1-y)^{-1}t_n^{-1}M_n +t_n^{-1}(M_n+\ln t_n)o_n.
\end{align*}

By \eqref{cal} and \eqref{ca}, we have $\phi(t)\ge q_1-h$ for $t\ge t_0$. Thus for $\tau_j\le L\le  t_ny$ and sufficiently large $n$, 
$$\frac{\phi(t_n-\tau_i)}{t_n-\tau_i}\stackrel{}{\ge}  \frac{q_1-h}{t_n(1-y)}.$$
This gives
\[\sum\nolimits_{j=1}^{N}\Big(\sum_{i=1}^{j-1}\frac{\phi(t_n-\tau_i)}{t_n-\tau_i}+(N-1)\frac{h }{t_n}\Big)\ge \Big(h+\frac{q_1-h }{2(1-y)}\Big)t_n^{-1}N(N-1),\]
which after multiplying by $t_nM_n$ and taking expectations, yields
\begin{align*}
\Big(h+\frac{q_1-h }{2(1-y)}\Big)M_n\rE_{t_ny}(N(N-1))
\le a(1-y)^{-1}M_n +(M_n+\ln t_n)o_n.
\end{align*}
Finally, since
$$ \rE_{t_ny}(N(N-1))\to2b,\quad n\to\infty,$$
we derive that for any $0<\epsilon<y<1$, there is a finite $n_\epsilon$ such that for all $n>n_\epsilon$,
$$M_n\Big(2bh(1-y)+bq_1-bh-a-\epsilon\Big) \le \epsilon\ln t_n.$$

By \eqref{stop}, we have $bh\ge a$, and therefore, 
$$2bh(1-y)+bq_1-bh-a-\epsilon\ge bq_1-2bhy-y.$$ 
Thus, choosing $y=y_0$ such that $bq_1-2bhy_0-y_0=\frac{bq_1}{2}$, we see that
$$\limsup_{n\to\infty}\frac{M_n}{\ln t_n} \le \frac{2\epsilon}{bq_1},$$
which entails \eqref{dog} as $\epsilon\to0$, concluding the proof of $\phi(t)\to 0$.

\section{Proof of Theorem \ref{thL}}\label{Lp1}
We will use the following notational agreements for the $k$-dimensional probability generating function
\[\rE(z_1^{Z(t_1)}\cdots z_k^{Z(t_k)})=\sum_{i_1=0}^\infty\ldots\sum_{i_k=0}^\infty\rP(Z(t_1)=i_1,\ldots, Z(t_k)=i_k)z_1^{i_1}\cdots z_k^{i_k},\]
with $0< t_1\le \ldots\le t_k$ and $z_1,\ldots,z_k\in[0,1]$. We denote
\[P_k(\bar t,\bar z):=P_k(t_1,\ldots,t_{n};z_1,\ldots,z_{k}):=\rE(z_1^{Z(t_1)}\cdots z_k^{Z(t_k)}),\]
and write for $t\ge0$,
\[P_k(t+\bar t,\bar z):=P_k(t+t_1,\ldots,t+t_{k};z_1,\ldots,z_{k}).\]
Moreover, 
for $0< y_1<\ldots<y_k$, we write
\[P_k(t\bar y,\bar z):=P_k(ty_1,\ldots,ty_{k};z_1,\ldots,z_{k}),\]
and assuming $0< y_1<\ldots<y_k<1$, 
\[P_k^*(t,\bar y,\bar z):=\rE(z_1^{Z(ty_1)}\cdots z_{k}^{Z(ty_{k})};Z(t)=0)=P_{k+1}(ty_1,\ldots,ty_k,t;z_1,\ldots,z_k,0).
\]

These notational agreements will be similarly applied to the functions
\begin{equation}\label{Q*}
Q_k(\bar t,\bar z):=1-P_k(\bar t,\bar z),\quad Q_k^*(t,\bar y,\bar z):=1-P_k^*(t,\bar y,\bar z).
\end{equation}
Our special interest is in the function
\begin{equation}\label{krik}
 Q_k(t):=Q_k(t+\bar t,\bar z),\quad 0= t_1< \ldots< t_k, \quad z_1,\ldots,z_k\in[0,1),
\end{equation}
to be viewed as a counterpart of the function $Q(t)$ treated by Theorem 2. Recalling the compound parameters $h=\frac{a+\sqrt{a^2+4bd}}{2b}$ and $c=4bda^{-2}$, put
\begin{equation}\label{hk}
h_k:=h\frac{1+\sqrt{1+cg_k}}{1+\sqrt{1+c}},\quad g_k:= g_k(\bar y,\bar z):=\sum_{i=1}^{k}z_1\cdots z_{i-1}(1-z_{i})y_{i}^{-2}.
\end{equation}
The key step of the proof of Theorem 1 is to show that for any given $1=y_1<y_2<\ldots<y_k$,
\begin{equation}\label{dm}
tQ_k(t)\to h_k,\quad t_i:=t(y_i-1), \quad i=1,\ldots,k,\quad t\to\infty.
\end{equation}
This is done following the steps of our proof of $tQ(t)\to h$ given in Section \ref{out}. 

Unlike $Q(t)$, the function $Q_k(t)$ is not monotone over $t$. However, monotonicity of $Q(t)$ was used in the proof of Theorem 2 only in the proof of \eqref{ca}. The corresponding statement 
$$ 0<q_1\le tQ_k(t)\le q_2<\infty,\quad t\ge t_0,
$$
follows from the bounds $(1-z_1)Q(t)\le Q_k(t)\le Q(t)$, which hold due to monotonicity of the underlying generating functions over
$z_1,\ldots,z_{n}$. Indeed,
\[Q_k(t)\le Q_k(t, t+t_2,\ldots,t+t_{k};0,\ldots,0)= Q(t),\]
and on the other hand,
\[Q_k(t)= Q_k(t,t+t_2,\ldots,t+t_{k};z_1,\ldots,z_k)= \rE(1-z_1^{Z(t)}z_2^{Z(t+t_2)}\cdots z_k^{Z(t+t_k)})\ge \rE(1-z_1^{Z(t)}),\]
where 
\[ \rE(1-z_1^{Z(t)})\ge  \rE(1-z_1^{Z(t)};Z(t)\ge1)\ge (1-z_1)Q(t).\]

\subsection{Proof of \ $\boldsymbol{tQ_k(t)\to h_k}$}\label{Lup}

The branching property \eqref{CD} of the GWO-process  gives
\[ \prod_{i=1}^{k} z_i^{Z(t_i)}=\prod_{i=1}^{k} z_i^{1_{\{L>t_i\}}}\prod\nolimits_{j=1}^{N} z_i^{Z_j(t_i-\tau_j)}.\]
Given $0< t_1<\ldots<t_k< t_{k+1}=\infty$, we use
\begin{align*}
\prod_{i=1}^{k} z_i^{1_{\{L>t_i\}}}&=1_{\{L\le t_1\}}+\sum_{i=1}^{k}z_1\cdots z_{i}1_{\{t_{i}<L\le t_{i+1}\}},
\end{align*}
to deduce the following counterpart of  \eqref{ejp} 
\begin{align*}
P_k(\bar t,\bar z)&=\rE_{t_1}\Big(\prod_{j=1}^{N}P_k(\bar t-\tau_j,\bar z)\Big)+\sum_{i=1}^{k}z_1\cdots z_{i}\rE\Big(\prod_{j=1}^{N}P_k(\bar t-\tau_j,\bar z); 
t_{i}<L\le t_{i+1}\Big),
\end{align*}
which entails
\begin{align}\label{apes}
P_k(\bar t,\bar z)&=\rE_{t_1}\Big(\prod_{j=1}^{N}P_k(\bar t-\tau_j,\bar z)\Big)+\sum_{i=1}^{k}z_1\cdots z_{i} \rP(t_{i}<L\le t_{i+1}) \nonumber\\
&-\sum_{i=1}^{k}z_1\cdots z_{i} \rE\Big(1-\prod_{j=1}^{N}P_k(\bar t-\tau_j,\bar z); t_{i}<L\le t_{i+1}\Big).
\end{align}
Using this relation we establish the following counterpart of Lemma \ref{fQd}.

\begin{lemma}\label{fad}
Consider function \eqref{krik} and put 
$P_k(t):=1-Q_k(t)=P_k(t+\bar t,\bar z)$.
For $0<u<t$, the relation
 \begin{align} 
\Phi(h_k t^{-1})&= \rP(L> t)-\sum_{i=1}^{k}z_1\cdots z_{i}\rP(t+t_i<L\le t+t_{i+1}) \nonumber \\
&+\rE_u\Big(\sum\nolimits_{j=1}^{N}Q_k(t-\tau_j)\Big)-Q_k(t)+\rE_u(W_k(t))+D_k(u,t), \label{arr}
\end{align}
holds with $t_{k+1}=\infty$,
 \begin{align} \label{tWt}
W_k(t):=(1-h_k t^{-1})^{N}+Nh_k t^{-1}-\sum\nolimits_{j=1}^{N}Q_k(t-\tau_j)-\prod\nolimits_{j=1}^{N}P_k(t-\tau_j)
\end{align}
and
 \begin{align} \label{tDut}
D_k(u,t):=\ &\rE\Big(1-\prod\nolimits_{j=1}^{N}P_k(t-\tau_j);u<L\le t\Big)+\rE\Big((1-h_k t^{-1})^{N} -1+Nh_k t^{-1};L> u\Big)  \nonumber\\
&+\sum_{i=1}^{k}z_1\cdots z_{i} \rE\Big(1-\prod_{j=1}^{N}P_k(t-\tau_j); t+t_{i}<L\le t+t_{i+1}\Big).
\end{align}

\end{lemma}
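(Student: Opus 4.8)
The plan is to mirror the proof of Lemma \ref{fQd} line for line, now starting from the $k$-dimensional branching relation \eqref{apes} in place of \eqref{ejp}. First I would specialise \eqref{apes} to the shifted arguments $t+\bar t$, recalling that $t_1=0$ so that the leading region $\{L\le t_1\}$ becomes $\{L\le t\}$; writing $P_k(t-\tau_j):=P_k(t+\bar t-\tau_j,\bar z)$ this reads
\begin{align*}
P_k(t)&=\rE\Big(\prod\nolimits_{j=1}^{N}P_k(t-\tau_j);L\le t\Big)+\sum_{i=1}^{k}z_1\cdots z_{i}\,\rP(t+t_{i}<L\le t+t_{i+1})\\
&\quad-\sum_{i=1}^{k}z_1\cdots z_{i}\,\rE\Big(1-\prod\nolimits_{j=1}^{N}P_k(t-\tau_j);t+t_{i}<L\le t+t_{i+1}\Big),\quad t_{k+1}=\infty.
\end{align*}
Then, exactly as in the passage from \eqref{ejp} to \eqref{Qln}, I would split the leading term at the level $u$, writing $\rE(\cdot;L\le t)=\rE_u(\cdot)+\rE(\cdot;u<L\le t)$.

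Next I would expand $\Phi(h_kt^{-1})$ through its definition \eqref{AL} and split it at $u$, add the trivial decomposition $1=\rP(L\le u)+\rP(u<L\le t)+\rP(L>t)$, and subtract the split expression for $P_k(t)$. The terms should then reorganise precisely as in Lemma \ref{fQd}: the contributions carried by $\{L\le u\}$ combine, via $\rP(L\le u)=\rE_u(1)$ and the definition \eqref{tWt} of $W_k(t)$, into $\rE_u(W_k(t))+\rE_u(\sum_{j=1}^{N}Q_k(t-\tau_j))$; the probability $\rP(u<L\le t)$ merges with $-\rE(\prod_jP_k(t-\tau_j);u<L\le t)$ to give $\rE(1-\prod_jP_k(t-\tau_j);u<L\le t)$; and together with the $\{L>u\}$ tail of the $\Phi$-expansion and the age-stratified residuals $\sum_i z_1\cdots z_i\,\rE(1-\prod_jP_k(t-\tau_j);t+t_i<L\le t+t_{i+1})$ these assemble exactly into $D_k(u,t)$ as defined in \eqref{tDut}. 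What survives outside these groups are the bare probabilities $\rP(L>t)$ and $-\sum_i z_1\cdots z_i\,\rP(t+t_i<L\le t+t_{i+1})$, and a final rearrangement yields \eqref{arr}.

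The computation is conceptually identical to that of Lemma \ref{fQd}; the only genuinely new feature is the presence of the age-stratified sums over the intervals $\{t+t_i<L\le t+t_{i+1}\}$ produced by the multidimensional indicator identity $\prod_i z_i^{1_{\{L>t_i\}}}=1_{\{L\le t_1\}}+\sum_i z_1\cdots z_i\,1_{\{t_i<L\le t_{i+1}\}}$. The step I expect to require the most care is therefore purely the bookkeeping: verifying that the $(1-\prod_jP_k)$ contributions on these intervals land \emph{inside} $D_k(u,t)$ while the pure probabilities $\rP(t+t_i<L\le t+t_{i+1})$ stay \emph{outside} it, so that the residual probability terms collapse to the stated combination $\rP(L>t)-\sum_i z_1\cdots z_i\,\rP(t+t_i<L\le t+t_{i+1})$. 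No new analytic input is needed—only a disciplined tracking of which event each expectation is restricted to, with the convention $t_{k+1}=\infty$ ensuring the last interval $\{L>t+t_k\}$ is accounted for correctly.
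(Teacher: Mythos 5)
Your proposal is correct and follows essentially the same route as the paper's own proof: specialise \eqref{apes} to the shifted arguments with $t_1=0$, split the leading expectation at level $u$, expand $\Phi(h_kt^{-1})$ against the decomposition $1=\rP(L\le u)+\rP(u<L\le t)+\rP(L>t)$, subtract, and let the $\{L\le u\}$ terms form $\rE_u(W_k(t))+\rE_u\big(\sum_{j=1}^{N}Q_k(t-\tau_j)\big)$ while the remaining restricted expectations assemble into $D_k(u,t)$ and the bare probabilities stay outside. The bookkeeping you flag as the delicate point is exactly what the paper's rearrangement does, so no gap remains.
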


\begin{proof} 
According to \eqref{apes}, 
\begin{align*}
P_k(t)&=\rE_u\Big(\prod_{j=1}^{N}P_k(t-\tau_j)\Big)+\rE\Big(\prod\nolimits_{j=1}^{N}P_k(t-\tau_j);u<L\le  t\Big)
\\
&+\sum_{i=1}^{k}z_1\cdots z_{i} \rP(t+t_{i}<L\le t+t_{i+1})-\sum_{i=1}^{k}z_1\cdots z_{i} \rE\Big(1-\prod_{j=1}^{N}P_k(t-\tau_j); t+t_{i}<L\le t+t_{i+1}\Big).
\end{align*}
By the definition of $\Phi(\cdot)$,
\begin{align*} 
\Phi(h_k t^{-1})+1
&=\rE_u\Big((1-h_k t^{-1})^{N}+Nh_k t^{-1}\Big)+\rP(L> t)\\
&+\rE\Big((1-h_k t^{-1})^{N} -1+Nh_k t^{-1};L> u\Big)+\rP(u<L\le t),
\end{align*}
and after subtracting the two last equations, we get
\begin{align*}
\Phi(h_k t^{-1})+Q_k(t)&=\rE_u\Big((1-h_k t^{-1})^{N} +Nh_k t^{-1}-\prod\nolimits_{j=1}^{N}P_k(t-\tau_j)\Big)+\rP(L> t)\\
&-\sum_{i=1}^{k}z_1\cdots z_{i} \rP(t+t_{i}<L\le t+t_{i+1})+D_k(u,t)
\end{align*}
with $D_k(u,t)$ satisfying \eqref{tDut}.
After a rearrangement, relation \eqref{arr} follows together with  \eqref{tWt}.
\end{proof}

With Lemma \ref{fad} in hand, convergence \eqref{dm} is proven applying almost exactly the same argument used in the proof of $tQ(t)\to h$. An important new feature emerges due to the additional term in the asymptotic relation defining the limit $h_k$. Let $1=y_1<y_2<\ldots<y_k<y_{k+1}=\infty$. Since 
\begin{align*}
\sum\nolimits_{i=1}^{k}z_1\cdots z_{i}\rP(ty_{i}<L\le ty_{i+1})\sim d t^{-2}\sum_{i=1}^{k}z_1\cdots z_{i}(y_{i}^{-2}-y_{i+1}^{-2}),
\end{align*}
we see that
\begin{align*}
\rP(L> t)-\sum\nolimits_{i=1}^{k}z_1\cdots z_{i}\rP(ty_{i}<L\le ty_{i+1})\sim dg_k t^{-2},
\end{align*}
where $g_k$ is defined by \eqref{hk}.
Assuming $0\le z_1,\ldots,z_k<1$, we ensure that $g_k>0$, and as a result, we arrive at a counterpart of the quadratic equation \eqref{stop}, 
\[
bh_k^2=ah_k+dg_k,
\]
which gives 
\[
h_k=\frac{a+\sqrt{a^2+4bdg_k}}{2b}=h\frac{1+\sqrt{1+cg_k}}{1+\sqrt{1+c}},\]
justifying our definition \eqref{hk}.
We conclude that for $k\ge1$,
\begin{equation}\label{love}
\frac{Q_k(t\bar y,\bar z)}{Q(t)}\to \frac{1+\sqrt{1+c\sum\nolimits_{i=1}^{k}z_1\cdots z_{i-1}(1-z_{i})y_{i}^{-2}}}{1+\sqrt{1+c}},\quad 1=y_1<\ldots< y_k,\quad 
0\le z_1,\ldots,z_k<1.
\end{equation}

\subsection{Conditioned generating functions}\label{Send}
To finish the proof of Theorem 1, consider the generating functions conditioned on the survival of the GWO-process. 
Given \eqref{mansur} with $j\ge1$, we have
\begin{align*}
Q(t)\rE&(z_1^{Z(ty_1)}\cdots z_k^{Z(ty_k)}|Z(t)>0)=\rE(z_1^{Z(ty_1)}\cdots z_k^{Z(ty_k)};Z(t)>0)\\
&=P_k(t\bar y,\bar z)-\rE(z_1^{Z(ty_1)}\cdots z_k^{Z(ty_k)};Z(t)=0)\stackrel{\eqref{Q*}}{=}Q_j^*(t,\bar y,\bar z)-Q_k(t\bar y,\bar z),
\end{align*}
and therefore,
\[\rE(z_1^{Z(ty_1)}\cdots z_k^{Z(ty_k)}|Z(t)>0)=\frac{Q_j^*(t,\bar y,\bar z)}{Q(t)}-\frac{Q_k(t\bar y,\bar z)}{Q(t)}.\]
Similarly, if \eqref{mansur} holds with $j=0$, then
\[\rE(z_1^{Z(ty_1)}\cdots z_k^{Z(ty_k)}|Z(t)>0)=1-\frac{Q_k(t\bar y,\bar z)}{Q(t)}.\]

Letting $t'=ty_1$, we get
\[\frac{Q_k(t\bar y,\bar z)}{Q(t)}=\frac{Q_k(t',t'y_2/y_1,\ldots,t'y_k/y_1)}{Q(t')}\frac{Q(ty_1)}{Q(t)},\]
and applying relation \eqref{love},
\begin{equation*}
\frac{Q_k(t\bar y,\bar z)}{Q(t)}\to \frac{1+\sqrt{1+\sum\nolimits_{i=1}^{k}z_1\cdots z_{i-1}(1-z_{i})\Gamma_i}}{(1+\sqrt{1+c})y_1},
\end{equation*}
where $\Gamma_i=c({y_1}/{y_i} )^2$.
On the other hand, since
\[Q_j^*(t,\bar y,\bar z)=Q_{j+1}(ty_1,\ldots,ty_j,t;z_1,\ldots,z_j,0), \quad j\ge1,\]
we also get
\begin{equation*}
\frac{Q_j^*(t,\bar y,\bar z)}{Q(t)}\to \frac{1+\sqrt{1+\sum\nolimits_{i=1}^{j}z_1\cdots z_{i-1}(1-z_{i})\Gamma_i+cz_1\cdots z_{j}y_1^2}}{(1+\sqrt{1+c})y_1}.
\end{equation*}
We conclude that as stated in Section \ref{main},
\begin{align*}
\rE(z_1^{Z(ty_1)}\cdots z_k^{Z(ty_k)}|Z(t)>0)\to \rE(z_1^{\eta(y_1)}\cdots z_k^{\eta(y_k)}).
\end{align*}

\subsection*{Acknowledgements}
The author is grateful to two anonymous referees for their valuable comments, corrections, and suggestions that helped to enhance the readability of the paper.


\begin{thebibliography}{99pekin}\label{ref}
\bibitem{AN} Athreya, K. B. and Ney, P. E. Branching processes. John Wiley \& Sons, London-New York-Sydney, 1972.
\bibitem{BH} Bellman, R. and Harris, T. E. On the theory of age-dependent stochastic branching processes. Proc. Nat. Acad. Sci., 34 (1948) 601–604.
\bibitem{D}  Durham, S. D.  Limit theorems for a general critical branching process. Journal
of Applied Probability, 8 (1971) 1-16.

\bibitem{H}  Holte, J. M. Extinction probability for a critical general branching process.
Stochastic Processes and their Applications, 2 (1974) 303-309.

\bibitem{J} Jagers, P. Branching processes with biological applications. John Wiley \& Sons, London-New York-Sydney, 1975.
\bibitem{K}  Kolmogorov, A. N. Zur l\"{o}sung einer biologischen aufgabe. Comm. Math. Mech.
Chebyshev Univ. Tomsk, 2 (1938) 1-12.

\bibitem{95} Sagitov, S. Three limit theorems for reduced critical branching processes. Russian Math. Surveys  50  (1995),  no. 5, 1025--1043.
\bibitem{21} Sagitov, S.  Critical Galton-Watson processes with overlapping generations. Stochastics and Quality Control, 36 (2021) 87-110.
\bibitem{Sev} Sevastyanov, B. A. The Age-dependent Branching Processes. Theory Probab. Appl., 9 (1964) 521–537.
\bibitem{Seva} Sewastjanow, B.A. Verzweigungsprozesse, Akademie-Verlag, Berlin, 1974.
\bibitem{T} Topchii, V. A. Properties of the probability of nonextinction of general critical branching processes under weak restrictions. Siberian Math. J., 28 (1987) 832--844.
\bibitem{V79} Vatutin, V. A. A new limit theorem for the critical Bellman–Harris branching process, Math. USSR-Sb., 37 (1980) 411–423.
\bibitem{WG} Watson, H. W. and Galton, F. On the probability of the extinction of families. J. Anthropol. Inst. Great B. and Ireland 4 (1874) 138--144.
\bibitem{Ya} Yakymiv, A. L. Two limit theorems for critical Bellman-Harris branching processes. Math. Notes, 36 (1984) 546–550.
\end{thebibliography}
\end{document}